\begin{document}
\newtheorem{lemma}{Lemma}[section]
\newtheorem{lemm}[lemma]{Lemma}
\newtheorem{prop}[lemma]{Proposition}
\newtheorem{coro}[lemma]{Corollary}
\newtheorem{theo}[lemma]{Theorem}
\newtheorem{conj}[lemma]{Conjecture}
\newtheorem{prob}{Problem}
\newtheorem{ques}{Question}
\newtheorem{rema}[lemma]{Remark}
\newtheorem{rems}[lemma]{Remarks}
\newtheorem{defi}[lemma]{Definition}
\newtheorem{exam}[lemma]{Example}

\newcommand{\N}{\mathbb N}
\newcommand{\Z}{\mathbb Z}
\newcommand{\R}{\mathbb R}
\newcommand{\Q}{\mathbb Q}
\newcommand{\C}{\mathbb C}

\title{Bounded characteristic classes and flat bundles}

\author{I. Chatterji}
\author{Y. de Cornulier}
\author{G. Mislin}
\author{Ch. Pittet}

\address{MAPMO, Universit\'e d'Orl\'eans}    
\email{Indira.Chatterji@univ-orleans.fr}
\address{D\'epartement de Math\'ematiques, 
Universit\'e de Paris-Sud}
\email{yves.cornulier@math.u-psud.fr}
\address{Department of Mathematics, ETH, Z\"urich}    
\curraddr{Department of Mathematics, Ohio State University}
\email{mislin@math.ethz.ch}
\address{LATP, CMI, Aix-Marseille Universit\'e}
\email{pittet@cmi.univ-mrs.fr}

\keywords{Bounded cohomology,  classifying spaces, flat bundles, Lie groups, 
subgroup distortion, stable commutator length, word metrics}

\subjclass[2000]{Primary: 57T10, 55R40; Secondary: 20F65, 53C23, 22E15}


\thanks{The authors are grateful to the ETH Z\"urich, the MSRI in Berkeley, the Ohio State University, the Poncelet Laboratory in Moscow, where part of this work was done.
I. Chatterji and Y. de Cornulier were partially supported by the ANR JC08-318197.
Ch. Pittet was partially supported by the CNRS}

\date{February 15, 2012}

\begin{abstract}  
Let $G$ be a connected Lie group and let $R$ denote its radical. 
We show that all classes in the image of the
canonical map $H^*(BG,\mathbb R)\to H^*(BG^{\delta},\mathbb R)$ are bounded
if and only if the derived group $[R,R]$ is simply connected. We also give
equivalent conditions in terms of stable commutator length and distortion. 
\end{abstract}
\maketitle

\section{Introduction}

Let $G$ be a connected Lie group and let $G^{\delta}$ be the underlying group with the discrete topology. Let $A$ be
an abelian group endowed with a metric. The identity map $G^{\delta}\to G$ induces a natural ring homomorphism
\[
	H^*(BG,A)\to H^*(BG^{\delta},A),
\]
from the (singular) cohomology of the classifying space $BG$ of $G$ to the cohomology of $BG^{\delta}$. 
A class $\alpha$ in $H^n(BG^{\delta},A)=H^n(G,A)$ is \emph{bounded} if it can be represented by a cocycle $c:G^n\to A$ with bounded image.
We define a sub-additive function on $H^n(G,A)$ by
\[
	\|\alpha\|_{\infty}=\inf_{[c]=\alpha}\{\sup\{|c(g_1,\dots,g_n)|; g_1,\cdots,g_n\in G\}\}\;\in\mathbb R\cup\{\infty\}.
\]
When $A=\mathbb R$ with its usual metric, it corresponds to Gromov's seminorm \cite[Section 1.1]{GroVol} and was first considered by Dupont \cite{Dup} in degree two.
If $A$ is 
finitely generated, we use the word length with respect to a finite symmetric
generating set to define a metric. On $\mathbb R^r$ we consider the Euclidean metric
and when $A\cong\mathbb Z^r\subset\mathbb R^r$ is discrete we also consider the restriction of the Euclidean metric to $A$.  

The image of the natural map $H^*(BG,\mathbb R)\to H^*(BG^{\delta},\mathbb R)$ is
generated as a ring by bounded classes together with the image 
in degree two \cite[Lemma 51, Theorem 54]{CMPS}.

If a class $\alpha$ in the image of $H^2(BG,\mathbb R)\to 
H^2(BG^\delta, \mathbb R)$ is bounded, then
for any continuous map $\phi:\Sigma_g\to BG^\delta$ of a closed oriented surface of genus $g\ge 1$,
the characteristic number $\phi^*(\alpha)([\Sigma_g])\in \mathbb R$ belongs to a bounded
set $[-C,C]$ with $C=C(\alpha,g)$ depending only on $\alpha$ and $g$. Indeed,
$$|\phi^*(\alpha)([\Sigma_g])|\le ||\alpha||_\infty\cdot||[\Sigma_g]||_1=||\alpha||_\infty\cdot(4g-4),\quad
g\ge 1.$$ 
(For the
classical bound $C=g-1$ on the Euler number of flat ${\rm GL}^+_2(\mathbb R)$-bundles
over a surface of genus $g>0$, see Milnor \cite{Mil}.)
If, on the contrary, there exists a sequence $(\phi_n :\Sigma_g\to BG^\delta)$ of flat $G$-bundles with 
\[
	     \lim_{n\to\infty}\phi_n^*(\alpha)([\Sigma_g])=\infty,
\]
then $\alpha$ is unbounded.
(A family of maps $\phi_n: S^1\times S^1\to BQ^\delta$, $n\ge 1$, where $Q$ is the quotient of the three-dimensional Heisenberg
group by an infinite cyclic central subgroup, such that the sequence
$\phi^*_n(\alpha)([S^1\times S^1])$ is unbounded, where  $\alpha\in H^2(BQ^\delta,\mathbb R)$ is the image of a generator of $H^2(BQ,\mathbb Z)\cong\mathbb Z$,  is given in Goldman \cite{Gol}.) 

In the present work we give a necessary and sufficient condition on $G$ 
for the classes in the image of 
$H^*(BG,\mathbb R)\to H^*(BG^{\delta},\mathbb R)$ to be bounded.
Previously known sufficient conditions where: $G$ is linear algebraic (Gromov \cite[Theorem p. 23]{GroVol},
resp.~Bucher \cite{Bucher}), and the weaker condition that the radical $R$ of $G$ is linear
\cite[Theorem 54]{CMPS}; the radical of $G$ is the largest connected, normal,
solvable subgroup of $G$. It is well-known that $R$ is linear if and only if
the \emph{closure} of its derived subgroup $[R,R]$ is simply connected.
Hence linearity of $R$ is stronger than simple connectedness of $[R,R]$ because
$$\pi_1\left([R,R]\right)\subset\pi_1\left(\overline{[R,R]}\right).$$
For an example with 
$$\{0\}=\pi_1\left([R,R]\right)\subsetneq\pi_1\left(\overline{[R,R]}\right),$$
see \cite[Example 37]{CMPS}. 

Our main theorem shows that simple connectedness of $[R,R]$ is the weakest possible condition implying boundedness of characteristic classes:

\begin{theo}\label{mainintro}
	Let $G$ be a connected Lie group and let $R$ be its radical. 
	The following conditions are equivalent. 
	\begin{enumerate}
		\item All elements in the image of 
		$H^*(BG,\mathbb R)\to H^*(BG^{\delta},\mathbb R)$
		are bounded.
		\item The derived group $[R,R]$
		of the radical of $G$ is simply connected.
		
	\end{enumerate}
\end{theo}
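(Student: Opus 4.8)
\emph{Proof sketch.} The plan is to reduce each implication to an already available result, the one structural input being that a simply connected connected solvable Lie group is diffeomorphic to a Euclidean space, hence contractible. For $(2)\Rightarrow(1)$: if $[R,R]$ is simply connected it is contractible, hence so is $B[R,R]$, and in the fibration $B[R,R]\to BG\to B(G/[R,R])$ the map $BG\to B(G/[R,R])$ is a homotopy equivalence; being induced by the group homomorphism $G\to G/[R,R]$ it is compatible with the comparison maps from the classifying spaces of the underlying discrete groups. Hence the image of $H^*(BG,\R)\to H^*(BG^\delta,\R)$ equals the pullback, along $BG^\delta\to B(G/[R,R])^\delta$, of the corresponding image for $G/[R,R]$. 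But the radical of $G/[R,R]$ is $R/[R,R]$, which is abelian and therefore linear, so by \cite[Theorem~54]{CMPS} every class in that image is bounded; since pullback along a group homomorphism does not increase $\|\cdot\|_\infty$, the same follows for $G$. (Along this route the ring-generation statement recalled above is not even needed; the only point requiring care is the naturality of the comparison maps under $G\to G/[R,R]$, which is routine.)

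For $(1)\Rightarrow(2)$ I would prove the contrapositive and exhibit an unbounded class already in degree two. Let $p\colon\widetilde G\to G$ be the universal cover, with central kernel $\Pi=\pi_1(G)$, and let $\widetilde R$ be the identity component of $p^{-1}(R)$. Since $\widetilde G$ is simply connected and every Lie group has vanishing $\pi_2$, the long exact sequence of the fibration $p^{-1}(R)\to\widetilde G\to G/R$ shows that $\widetilde R$ is simply connected; hence $\widetilde R$, respectively $[\widetilde R,\widetilde R]$, is the universal cover of $R$, respectively of $[R,R]$, and $\pi_1([R,R])=[\widetilde R,\widetilde R]\cap\Pi$. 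By hypothesis this group is nontrivial, and as a subgroup of $\pi_1(R)$, which is free abelian, it contains an element $\gamma$ of infinite order; choose a homomorphism $\phi\colon\Pi\to\R$ with $\phi(\gamma)\neq0$. Let $\alpha\in H^2(BG,\R)=\mathrm{Hom}(\pi_1(G),\R)$ be the class corresponding to $\phi$; its image in $H^2(BG^\delta,\R)$ is the class of the central extension $1\to\R\to E_\phi\to G\to 1$ obtained by pushing out the universal cover along $\phi$, and I claim it is unbounded.

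To see this, the key point is that $\gamma$ lies in $[\widetilde R,\widetilde R]$ with $\widetilde R$ amenable: amenability makes every homogeneous quasimorphism of $\widetilde R$ a homomorphism, so $\mathrm{scl}$ vanishes on $[\widetilde R,\widetilde R]$, whence $\mathrm{scl}_{\widetilde G}(\gamma)=0$ by monotonicity of $\mathrm{scl}$ under inclusions. Therefore for each $\varepsilon>0$ there are arbitrarily large $m$ and elements $a_i,b_i\in\widetilde G$, $1\le i\le g$, with $g\le\varepsilon m$ and (after padding with trivial commutators) $g\ge 2$, such that $\gamma^m=\prod_{i=1}^g[a_i,b_i]$. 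The images $p(a_i),p(b_i)$ then define a homomorphism from the genus-$g$ surface group to $G$ whose obstruction to lifting to $\widetilde G$, evaluated on the fundamental class, is $\gamma^m$; pushing forward by $\phi$, the characteristic number of $\alpha$ on this surface equals $m\,\phi(\gamma)$. The Milnor--Wood-type estimate recalled in the introduction yields $|m\,\phi(\gamma)|\le\|\alpha\|_\infty(4g-4)\le 4\varepsilon\,\|\alpha\|_\infty\,m$, so letting $\varepsilon\to0$ forces $\|\alpha\|_\infty=\infty$. I expect the main obstacle to be precisely this covering-space bookkeeping -- one must make sure the class witnessing non-boundedness is genuinely a characteristic class of $BG$, i.e.\ that $\pi_1([R,R])$ contributes an element of infinite order to $\pi_1(G)$ -- after which the analytic ingredients (vanishing of $\mathrm{scl}$ on the derived subgroup of an amenable group, together with the surface estimate) are standard.
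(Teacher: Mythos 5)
Your argument for the implication $(1)\Rightarrow(2)$ (via the contrapositive) is essentially correct and is a legitimate variant of the paper's: where the paper combines Goldman's construction with Johnson's vanishing of the bounded cohomology of the amenable group $R^\delta$, you combine the vanishing of $\mathrm{scl}$ on the amenable group $\tilde R$ with Milnor's formula (\ref{equa: formula}) to produce surfaces of small genus carrying a fixed-size multiple of $\phi(\gamma)$; the covering-space bookkeeping you worry about ($\pi_1([R,R])=[\tilde R,\tilde R]\cap\pi_1(G)$, torsion-freeness inside $\pi_1(R)\cong\Z^k$) is indeed fine.

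The genuine gap is in $(2)\Rightarrow(1)$, which is where the content of the theorem lies. Your reduction forms the quotient $G/[R,R]$ and the fibration $B[R,R]\to BG\to B(G/[R,R])$; this presupposes that $[R,R]$ is a \emph{closed} subgroup of $G$. In general the derived group of the radical is only an integral (immersed) subgroup and need not be closed, and if it is closed and simply connected then $\overline{[R,R]}=[R,R]$ is simply connected, i.e.\ $R$ is linear --- exactly the previously known sufficient condition \cite[Theorem 54]{CMPS} that the paper is improving on. The new case is $\pi_1([R,R])=0$ with $[R,R]$ not closed (see \cite[Example 37]{CMPS}, cited in the introduction, where $\pi_1([R,R])=0$ but $\pi_1(\overline{[R,R]})\neq 0$): there $G/[R,R]$ is not a Lie group (not even Hausdorff), the classifying-space fibration does not exist, and substituting $\overline{[R,R]}$ destroys both the contractibility of the fibre and the claim that every class of $BG$ is pulled back from the quotient. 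This is precisely why the paper argues differently: it establishes a uniform positive lower bound for the stable commutator length of central elements of $[\tilde G,\tilde G]$ with nonvanishing $\mathrm{scl}$ (Proposition \ref{prop: s>0}, using the Levi decomposition and the semisimple case), upgrades it to a linear bound $\mathrm{scl}(a)\ge\varepsilon|a|$ on the lattice $\pi_1(G)\cap[\tilde G,\tilde G]$ (Lemma \ref{lemma: epsilon}), and then bounds the obstruction class $o^{\delta}(EG)$ by a Hahn--Banach argument (Lemma \ref{lemma: Hahn-Banach}), after passing to a finite cover (Proposition \ref{prop: finite cover}); boundedness in degrees above two then still requires the ring-generation result \cite[Lemma 51, Theorem 54]{CMPS}, since the hard work only controls the degree-two class. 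None of this is replaced by anything in your proposal, so your proof of $(2)\Rightarrow(1)$ covers only the already-known case of closed $[R,R]$.
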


In Section 2 we discuss stable commutator length and subgroup distortion
in Lie groups; the equivalent conditions stated in the
above theorem can also be expressed in those terms (Theorem \ref{theo: main}).
Section 3 deals with the primary obstruction to the existence of a global section 
of the universal $G$-bundle; this is the main tool for what follows. In Section 4 we prove the existence of
a non-zero lower bound on the stable commutator length 
of all elements in the commutator subgroup of the universal
cover of $G$ which are central and whose  stable commutator length is non-zero; this is a key point in the proof of
Theorem \ref{theo: main}. Section 5 is devoted to the
proof of Theorem \ref{theo: main}. Theorem \ref{mainintro} is obtained as a corollary of 
Theorem \ref{theo: main}.

We like to thank Jean Lafont for valuable discussions.

\section{Stable commutator length and distortion}

Let $G$ be a group and $[G,G]<G$ its
commutator subgroup (derived subgroup). The commutator length ${\rm cl}(z)$ of  $z\in [G,G]$
is the smallest number of commutators needed to express $z$.
The commutator length is sub-additive and the \emph{stable commutator length}
of $z\in [G,G]$ is defined as
\[
{\rm scl}_{G}(z)=\lim_{n\to\infty}{\rm cl}(z^n)/n.
\]
(We will drop the subscript and simply write $\rm scl(z)$, except when we want to emphasize the reference to $G$.) 
It is sub-additive on commuting elements. 
	
Let $H$ be a group generated by a finite symmetric set
$S$. The word length $|h|$ of $h\in H$ is the smallest number of elements of $S$ needed to express $h$.
If $(G,d)$ is a group with a left-invariant metric $d$, 
a finitely generated subgroup $H$ of  $G$ is \emph{distorted} in $(G,d)$ if
\[
	\inf_{h\in H\setminus\{e\}}\frac{d(e,h)}{|h|}=0.
\]
Being distorted does not depend on the choice of the generating set $S$.
An element $z\in G$ is \emph{distorted} if the subgroup it generates is distorted in $(G,d)$.
(Notice that torsion elements - including by convention
the identity element - are \emph{not} distorted.)
In the case $G$ is a connected Lie group and $d$ is the geodesic metric on $G$ induced by a left-invariant Riemannian metric, a  finitely generated subgroup $H$ of $G$, which is distorted with respect to $d$, is distorted with respect to
any left-invariant Riemannian metric.

The fundamental group $\pi_1(G)$ of a connected Lie group $G$ is finitely generated
and embeds naturally in the universal cover $\tilde G$ of $G$ as a central discrete subgroup.
It is proved in \cite{CMPS} that $\pi_1(G)<\tilde{G}$ is
undistorted if and only if every class $x^\delta$ in the image of 
$\Lambda: H^*(BG,\mathbb Z)\to H^*(BG^\delta,\mathbb Z)$
admits a bounded Borel cocycle representative $c: G\times\cdots\times
G\to \mathbb Z,\; [c]=x^\delta$ (meaning that the range of $c$ is bounded
and for every subset $S\subset \mathbb Z$, $c^{-1}(S)$ is a Borel
subset in the topological space $G\times\cdots\times G$). There are
examples of classes $x^\delta$ in the image of $\Lambda$,
which are bounded, but which do not admit a bounded Borel
cocycle representative. 
However, this can only happen, if the derived subgroup $[R,R]$ 
of the radical of $G$ is not closed in $G$ (see \cite{CMPS} for such
examples). 

The proof of the following theorem will be given in Section 5.

\begin{theo}\label{theo: main}
	Let $G$ be a connected Lie group and let $\tilde{G}$ be its universal cover.
	Let $R$ be the radical of $G$. The following conditions are equivalent. 
	\begin{enumerate}
		\item\label{cond: bounded} All elements in the image of 
		$H^*(BG,\mathbb Z)\to H^*(BG^{\delta},\mathbb Z)$
		are bounded.
		\item\label{cond: simply connected} $\pi_1([R,R])=\{0\}.$
		\item\label{cond: undistorted} All elements $z\in\pi_1(G)$ are undistorted 
		in $\tilde G$.
		\item\label{cond: stable} If $z\in\pi_1(G)\cap [\tilde G,\tilde G]$ 
		then either ${\rm scl}_{\tilde G}(z)>0$
		or $z$ has finite order. 
	\end{enumerate}
\end{theo}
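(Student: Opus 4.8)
The plan is to prove Theorem \ref{theo: main} by establishing a cycle of implications among the four conditions, using the structure theory of connected Lie groups to reduce to the radical, and using the machinery of characteristic classes as an obstruction to sections of the universal bundle. Write $\tilde{G}$ for the universal cover of $G$, so that $\pi_1(G)$ sits as a central discrete subgroup. The backbone is the equivalence $(\ref{cond: simply connected})\Leftrightarrow(\ref{cond: undistorted})\Leftrightarrow(\ref{cond: stable})$, which lives entirely in the world of Lie groups and stable commutator length, together with the two-way bridge to condition $(\ref{cond: bounded})$ about bounded characteristic classes.

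First I would handle the easier equivalences on the group side. For $(\ref{cond: undistorted})\Leftrightarrow(\ref{cond: stable})$: a central element $z$ of infinite order that lies in $[\tilde G,\tilde G]$ satisfies, by the standard inequality relating word length in $\langle z\rangle$ and commutator length, that $z$ is undistorted in $\tilde G$ if and only if $\mathrm{scl}_{\tilde G}(z)>0$ — this is because a bound $|z^n|_{\tilde G}\le \varepsilon n$ in a word metric forces a comparable bound on $\mathrm{cl}(z^{kn})$ when $z\in[\tilde G,\tilde G]$, and conversely $\mathrm{scl}>0$ gives a linear lower bound on commutator length hence on any word length. Central elements of $\pi_1(G)$ not in $[\tilde G,\tilde G]$ must be treated separately: they map to nontrivial elements of $H_1(\tilde G)$, which is (a quotient of) $H_1$ of the reductive-minus-radical part, and such elements are automatically undistorted because $\tilde G$ retracts onto an abelian quotient detecting them — so the issue is genuinely about the $[\tilde G,\tilde G]$ part, and that is where $[R,R]$ enters. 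For $(\ref{cond: simply connected})\Leftrightarrow(\ref{cond: undistorted})$ one uses the Levi decomposition and the fact that $\pi_1(G)$ is a product (up to finite index issues) of $\pi_1$ of the semisimple part and $\pi_1(R)$; the semisimple directions are controlled classically (central extensions of semisimple Lie groups by $\mathbb{Z}$ are undistorted, essentially Milnor–Wood type bounds), while distortion of $\pi_1(R)$ inside the universal cover $\tilde R$ is governed precisely by whether $[R,R]$ is simply connected — if $\pi_1([R,R])\ne 0$ one produces an exponentially (or at least super-linearly) distorted central element using the nilpotent part of $R$, in the spirit of Goldman's Heisenberg example quoted in the introduction; if $\pi_1([R,R])=\{0\}$ one shows $\pi_1(R)$ splits off undistortedly.

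Next the bridge to $(\ref{cond: bounded})$. The direction $(\ref{cond: bounded})\Rightarrow(\ref{cond: undistorted})$ (contrapositive) uses the fact recalled in Section 2 from \cite{CMPS}: if some $z\in\pi_1(G)$ is distorted in $\tilde G$, then there is a class in the image of $\Lambda:H^*(BG,\mathbb Z)\to H^*(BG^\delta,\mathbb Z)$ with no bounded Borel cocycle representative, and more is true — one extracts from the distortion an honest sequence of flat $G$-bundles over a fixed surface with unbounded characteristic number (again Goldman-style), so the relevant degree-two class is genuinely unbounded. The reverse direction $(\ref{cond: simply connected})\Rightarrow(\ref{cond: bounded})$ is the substantive analytic step: by \cite[Lemma 51, Theorem 54]{CMPS} the image ring is generated by bounded classes together with degree-two classes, so it suffices to bound the degree-two contribution; here one invokes the primary-obstruction analysis of Section 3 (the obstruction to a global section of $EG\to BG$ restricted to the discrete topology), together with the key lower bound from Section 4 — a uniform positive lower bound on $\mathrm{scl}_{\tilde G}(z)$ over all central $z\in[\tilde G,\tilde G]$ of infinite order — to conclude that the corresponding cocycles can be chosen bounded. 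Simple connectedness of $[R,R]$ is exactly what guarantees that the only central elements of $\pi_1(G)$ landing in $[\tilde G,\tilde G]$ come from the semisimple part, where the Section 4 bound applies.

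The main obstacle I expect is the implication $(\ref{cond: simply connected})\Rightarrow(\ref{cond: bounded})$, and inside it the uniform positivity of stable commutator length: controlling $\mathrm{scl}$ from below simultaneously for all relevant central elements, rather than one element at a time, is delicate because $\tilde G$ is noncompact and the commutator length can a priori degrade along families. This is precisely why Section 4 is singled out in the introduction as "a key point". A secondary technical nuisance is bookkeeping with finite-index and torsion subtleties in $\pi_1(G)$ and in passing between $G$, $\tilde G$, and the radical — one must be careful that "simply connected $[R,R]$" is stated for $[R,R]$ itself and not its closure, and the example with $\pi_1([R,R])\subsetneq\pi_1(\overline{[R,R]})$ shows the distinction is real and that the Borel-cocycle refinement genuinely fails in the gap, so the proof of $(\ref{cond: bounded})$ must work with possibly non-Borel bounded cocycles in that regime.
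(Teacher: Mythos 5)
The substantive gap is your treatment of $(\ref{cond: undistorted})\Leftrightarrow(\ref{cond: stable})$, which you dispose of by a ``standard inequality relating word length and commutator length.'' No such inequality exists. If $z^n$ is a product of $\varepsilon n$ elements of a compact generating set $K$ of $\tilde G$, those factors are not commutators, and since the abelianization $\tilde G/[\tilde G,\tilde G]\cong\mathbb R^k$ is torsion-free there is no sorting argument converting such a short word into boundedly many commutators; so a bound on $|z^n|_{\tilde G}$ gives no control of $\mathrm{cl}(z^{kn})$, and conversely a linear lower bound on $\mathrm{cl}(z^n)$ gives no lower bound on word length. The equivalence is true, but in the paper it is exactly where the heavy inputs enter: $(\ref{cond: undistorted})\Rightarrow(\ref{cond: stable})$ uses the Levi decomposition to write $z=r\ell$ with $r\in[\tilde R,\tilde R]$ (so $\mathrm{scl}(r)=0$ by Bavard for solvable groups), Corollary \ref{ss-case} for the semisimple factor --- which itself rests on the boundedness of $o^\delta(EG)$ for semisimple groups from \cite{CMPS} --- and Proposition 19 of \cite{CMPS} to rule out the case $\ell$ of finite order; while the converse direction ``$\mathrm{scl}>0\Rightarrow$ undistorted'' is only obtained around the full cycle $(\ref{cond: stable})\Rightarrow(\ref{cond: bounded})\Rightarrow(\ref{cond: simply connected})\Rightarrow(\ref{cond: undistorted})$, essentially because it requires producing a cocycle/quasimorphism bounded on compact sets, which is the content of $(\ref{cond: bounded})$ and not a formal fact.

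A second gap is your bridge $(\ref{cond: bounded})\Rightarrow(\ref{cond: undistorted})$: from a distorted $z\in\pi_1(G)$ you claim to extract a sequence of flat $G$-bundles over a \emph{fixed} surface with unbounded characteristic numbers. Distortion only says $|z^n|/n\to 0$; to keep the genus fixed you would need $\mathrm{cl}(z^n)$ bounded, which distortion does not give. You are in effect upgrading the \cite{CMPS} criterion (undistortedness $\Leftrightarrow$ existence of bounded \emph{Borel} cocycle representatives) to genuine unboundedness, which is precisely the distinction the paper warns about: bounded classes without bounded Borel representatives exist when $[R,R]$ is not closed. The paper's route is instead $\neg(\ref{cond: simply connected})\Rightarrow\neg(\ref{cond: bounded})$: Goldman \cite{Gol} gives a single surface on which $o^\delta(ER)$ evaluates nontrivially in the torsion-free group $\pi_1([R,R])$, and unboundedness follows not from a sequence of characteristic numbers but from the vanishing of bounded cohomology of the amenable group $R^\delta$ \cite{John}, pushed into $G$ via $R\hookrightarrow G$; the distortion statement enters separately, in $(\ref{cond: simply connected})\Rightarrow(\ref{cond: undistorted})$, via \cite[Lemma 6.3]{CPS} and the direct-factor argument in $\tilde R/[\tilde R,\tilde R]$. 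Your sketch of $(\ref{cond: simply connected})\Rightarrow(\ref{cond: bounded})$ does identify the right tools (reduction to degree two via \cite{CMPS}, the Section 4 gap, the obstruction class), but the actual mechanism --- passing to a finite cover so that $\pi_1(G)\cap[\tilde G,\tilde G]$ lies in a vector subgroup, the linear bound of Lemma \ref{lemma: epsilon}, Barge--Ghys and the Hahn--Banach extension of Lemma \ref{lemma: Hahn-Banach} --- is left unaddressed; the two gaps above, however, are the ones that break the proposed proof.
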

 
In the statement of this theorem we used integer-valued group cohomology.   
It is well-known that all classes in the image of 
$H^*(BG,\mathbb R)\to H^*(BG^{\delta},\mathbb R)$
are bounded if and only if    
all classes in the image of 
$H^*(BG,\mathbb Z)\to H^*(BG^{\delta},\mathbb Z)$
are bounded; we recall the proof for the convenience of the reader. As the diagram of natural maps   
\[
	\xymatrix{
	H^*(BG,\mathbb Z)\ar[d]\ar[r]&H^*(BG,\mathbb R)\ar[d]\\
	H^*(BG^{\delta},\mathbb Z)\ar[r]&H^*(BG^{\delta},\mathbb R)
	}
	\]
commutes and as $H^*(BG,\mathbb Z)\otimes\mathbb R\cong H^*(BG,\mathbb R)$, the boundedness of integral classes implies the boundedness of the real ones. The converse also holds true because an integral class is bounded if and only if it is bounded when considered as a real class (apply \cite[Lemma 29]{CMPS} to $G^{\delta}$).
As a result, Theorem \ref{theo: main}
implies Theorem \ref{mainintro}.

\section{Primary obstruction in degree 2}

Let $G$ be a connected Lie group and let $\pi: P\to B$ be a principal $G$-bundle over a connected $CW$-complex $B$. 
We denote by
$$o(P)\in H^2(B,\pi_1(G))$$
the primary obstruction to the existence of a section for $\pi$.

Let $z\in\pi_1(G)$ be an element of the commutator
subgroup of the universal cover of $G$. Let $g={\rm cl}(z)$ be its
commutator length and let $x_1,\dots,x_g$, 
	$y_1, \dots,y_g$ be in the universal cover such that
	\[
		\prod_{n=1}^g[x_n,y_n]=z.
	\]
	The $2g$ elements of $G$ obtained
	by projecting $x_1,\dots,x_g,y_1,\dots,y_g$ to $G$ define a 
	homomorphisms $\pi_1(\Sigma_g)\to G$ 
	of the fundamental group of the closed oriented surface $\Sigma_g$ of genus $g$ 
	to $G$. Let $P\to\Sigma_g$ be the associated flat $G$-bundle over $\Sigma_g$. By construction
	\begin{equation}\label
{equa: formula}
		o(P)([\Sigma_{{\rm cl}(z)}])=z,	
	\end{equation}
	(see Milnor \cite[Lemma 2]{Mil} and Goldman \cite{Gol}).
	
Let $EG\to BG$ be the universal $G$-bundle. Since $G$ is
connected, $BG$ is simply connected and therefore
for any abelian group $A$
there are natural isomorphisms
\begin{align*}
H^2(BG,A)&\cong {\rm Hom}(H_2(BG,\mathbb Z),A)\\
                &\cong {\rm Hom}(\pi_2(BG),A)\\
                &\cong {\rm Hom}(\pi_1(G),A). 	
\end{align*}
In the case of $A=\pi_1(G)$, the universal class $o(EG)$ corresponds 
to ${\rm id}_{\pi_1(G)}\in {\rm Hom}(\pi_1(G),A)$.

Let $o^{\delta}(EG)$ denote the image of $o(EG)$ under the canonical
map $H^2(BG,\pi_1(G))\to H^2(BG^{\delta},\pi_1(G))$.

\begin{prop}\label{prop: bounded} Let $G$ be a connected Lie group. The following conditions are equivalent.
	\begin{enumerate}
		\item The class $o^{\delta}(EG)$ is bounded.
		\item All classes in the image of $H^2(BG,\mathbb Z)\to H^2(BG^{\delta},\mathbb Z)$\\
		are bounded. 
	\end{enumerate}
	
\end{prop}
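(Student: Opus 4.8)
The plan is to prove the two implications separately, using the isomorphism $H^2(BG,A)\cong \mathrm{Hom}(\pi_1(G),A)$ recorded just before the statement, which shows that $o(EG)$ is the ``universal'' degree-two class in the sense that every class in $H^2(BG,\mathbb{Z})$ is $f_*o(EG)$ for some homomorphism $f:\pi_1(G)\to\mathbb{Z}$.

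For the implication (2)$\Rightarrow$(1), suppose every class in the image of $H^2(BG,\mathbb{Z})\to H^2(BG^\delta,\mathbb{Z})$ is bounded. Since $\pi_1(G)$ is a finitely generated abelian group, choose homomorphisms $f_1,\dots,f_k:\pi_1(G)\to\mathbb{Z}$ (one for each free generator, say, modulo handling torsion) such that $\bigoplus f_i:\pi_1(G)\to\mathbb{Z}^k$ has finite kernel (the torsion subgroup), or even better pick enough $f_i$ so that the induced map separates points up to torsion; actually the cleanest route is to note that the map $H^2(BG^\delta,\pi_1(G))\to \prod_i H^2(BG^\delta,\mathbb{Z})$ induced by the $f_i$ is injective modulo bounded/torsion contributions. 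Each class $(f_i)_*o^\delta(EG)$ is the image of the integral class $(f_i)_*o(EG)\in H^2(BG,\mathbb{Z})$, hence bounded by hypothesis; choosing bounded cocycle representatives $c_i$ for each and assembling them gives a representative of $o^\delta(EG)$ (or of a finite multiple of it) with bounded image in $\pi_1(G)$ with respect to the word metric. The torsion part of $\pi_1(G)$ contributes only a bounded error since a torsion group is bounded. Hence $o^\delta(EG)$ is bounded.

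For (1)$\Rightarrow$(2), suppose $o^\delta(EG)$ is bounded, represented by a cocycle $c:G\times G\to\pi_1(G)$ with image contained in a word-ball of radius $N$. Let $x^\delta$ be any class in the image of $H^2(BG,\mathbb{Z})\to H^2(BG^\delta,\mathbb{Z})$; by the Hom-description, $x^\delta = f_*o^\delta(EG)$ for some homomorphism $f:\pi_1(G)\to\mathbb{Z}$. Then $f\circ c:G\times G\to\mathbb{Z}$ is a cocycle representing $x^\delta$, and its image lies in $f(B_N)$, which is a finite (hence bounded) subset of $\mathbb{Z}$ because $f$ is a homomorphism from a finitely generated group and $B_N$ is finite. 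Therefore $x^\delta$ is bounded. This direction is essentially immediate once the universality of $o(EG)$ is in hand.

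The main obstacle I anticipate is the bookkeeping in (2)$\Rightarrow$(1): one must be careful that assembling bounded representatives of the coordinate projections $(f_i)_*o^\delta(EG)$ actually yields a bounded representative of $o^\delta(EG)$ itself rather than merely of its image in $\mathbb{Z}^k$, and that the torsion subgroup of $\pi_1(G)$ is handled honestly (it is harmless since finite, but this should be stated). An alternative that sidesteps the assembling issue is to argue at the level of the seminorm directly: show $\|o^\delta(EG)\|_\infty$ is controlled by $\max_i\|(f_i)_*o^\delta(EG)\|_\infty$ up to a multiplicative constant depending only on the chosen generating set of $\pi_1(G)$, using that the word norm on a finitely generated abelian group is comparable to $\max_i|f_i(\cdot)|$ on the free part. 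Everything else is formal manipulation of the natural maps and the definition of boundedness.
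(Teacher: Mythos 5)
Your proof is correct and takes essentially the same route as the paper: the direction from boundedness of $o^{\delta}(EG)$ to boundedness of integral classes is exactly the paper's argument (push the bounded cocycle forward along $\phi_x:\pi_1(G)\to\mathbb Z$), and the converse uses, as the paper does, that $\pi_1(G)$ is a finitely generated abelian group. The assembling issue you flag is not a real gap: writing $\pi_1(G)\cong\mathbb Z^r\oplus T$ splits the coefficient system already at the cochain level, so coordinate-wise bounded representatives (the torsion component being automatically bounded) combine into a bounded representative of $o^{\delta}(EG)$ itself, which is precisely the content of the paper's terse second step.
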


\begin{proof}
Let $x\in H^2(BG,\mathbb Z)\cong {\rm Hom}(\pi_1(G),\mathbb Z)$ with
corresponding homomorphisms $\phi_x: \pi_1(G)\to \mathbb Z$.
By construction, the induced coefficient homomorphism $(\phi_x)_*: H^2(BG,\pi_1(G))\to
H^2(BG,\mathbb Z)$  maps $o(EG)$ to $x$. If $x^\delta$ denotes the image of $x$ in
$H^2(BG^\delta,\mathbb Z)$, we conclude by naturality that
$(\phi_x)_*(o^\delta(EG))=x^\delta$.
Because $\phi_x$ maps bounded sets to bounded sets, we conclude that (1) implies (2).
Conversely, because $\pi_1(G)$ is a finitely generated abelian group, (2) implies
that all classes in the image of $H^2(BG,\pi_1(G))\to H^2(BG^\delta,\pi_1(G))$ are
bounded and in particular, the universal class $o^\delta(EG)$ is bounded.
\end{proof}


If $x\in H^*(BG^{\delta},A)$, we denote $x_{\mathbb R}$ its image in $H^*(BG^{\delta},A\otimes\mathbb R)$. The class $o^\delta_{\mathbb R}(EG)$ lies in the subgroup $H^2(BG^\delta, \pi_1([G,G])\otimes\mathbb R)$.

\begin{prop}\label{prop: finite cover} Let $p:G\to Q$ be a finite homomorphic cover of connected Lie groups.
Then $p^*: H^*(BQ^\delta,\mathbb R)\to H^*(BG^\delta,\mathbb R)$ is an isomorphism and
maps the subgroup of bounded classes of $H^*(BQ^\delta,\mathbb R)$
bijectively onto the subgroup of bounded classes in $H^*(BG^\delta,\mathbb R)$.
As a result, $o^\delta(EG)$ is bounded, if and only if $o^\delta(EQ)$ is bounded.
\end{prop}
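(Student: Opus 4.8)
The plan is to exploit that $F:=\ker p$ is a finite normal subgroup of $G$ — in fact central, being a discrete normal subgroup of the connected group $G$ — so that on the level of discrete groups $p$ is the quotient homomorphism in a central extension $1\to F\to G^\delta\xrightarrow{p}Q^\delta\to 1$ with $F$ finite, and $p^*\colon H^*(BQ^\delta,\mathbb R)\to H^*(BG^\delta,\mathbb R)$ is the associated inflation map. That $p^*$ is an isomorphism I would obtain from the Lyndon--Hochschild--Serre spectral sequence $E_2^{s,t}=H^s(Q^\delta,H^t(F,\mathbb R))\Rightarrow H^{s+t}(G^\delta,\mathbb R)$: since $F$ is finite, $H^t(F,\mathbb R)=0$ for $t>0$ while $H^0(F,\mathbb R)=\mathbb R$ with trivial $Q^\delta$-action, so the sequence collapses onto the line $t=0$ and its edge homomorphism, which is inflation $=p^*$, is an isomorphism in every degree. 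That $p^*$ carries bounded classes to bounded classes is immediate: if $\alpha=[c]$ for a cocycle $c\colon(Q^\delta)^n\to\mathbb R$ with bounded image, then $c\circ(p\times\cdots\times p)$ is a cocycle with the same bounded image representing $p^*\alpha$.

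The substantive point is the converse: if $p^*\alpha$ is bounded then $\alpha$ is bounded. For this I would build a norm-non-increasing cochain-level splitting $A$ of $p^*$ by averaging over the fibers of $p$. Working in the homogeneous bar complex, for a $G^\delta$-invariant cochain $\phi\colon(G^\delta)^{n+1}\to\mathbb R$ set
\[
(A\phi)(\bar g_0,\dots,\bar g_n)=\frac{1}{|F|^{n+1}}\sum_{(h_0,\dots,h_n)\in F^{n+1}}\phi(\tilde g_0h_0,\dots,\tilde g_nh_n),
\]
for any lifts $\tilde g_i\in G^\delta$ of $\bar g_i$. One checks that $A\phi$ is independent of the chosen lifts, is $Q^\delta$-invariant, and — because the homogeneous differential only deletes slots and never multiplies them — commutes with the differential; thus $A$ induces $A_*\colon H^*(G^\delta,\mathbb R)\to H^*(Q^\delta,\mathbb R)$. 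Moreover $A$ does not increase the supremum norm of a cochain, and $A\circ p^*={\rm id}$ already on cochains, hence $A_*\circ p^*={\rm id}$. So if $p^*\alpha=[c]$ with $c$ of bounded image, then $\alpha=A_*(p^*\alpha)=[Ac]$ and $Ac$ has bounded image; therefore $\alpha$ is bounded. This proves that $p^*$ restricts to a bijection between the two subgroups of bounded classes.

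For the last assertion I would combine the above with Proposition \ref{prop: bounded}. By that proposition, $o^\delta(EG)$ is bounded if and only if every class in the image of $H^2(BG,\mathbb Z)\to H^2(BG^\delta,\mathbb Z)$ is bounded, and — using $H^2(BG,\mathbb Z)\otimes\mathbb R\cong H^2(BG,\mathbb R)$ together with \cite[Lemma 29]{CMPS}, exactly as recalled after Theorem \ref{theo: main} — this holds if and only if every class in the image of $H^2(BG,\mathbb R)\to H^2(BG^\delta,\mathbb R)$ is bounded; likewise for $Q$. Now $p$ induces the evident commuting square with horizontal maps $H^2(BQ,\mathbb R)\to H^2(BQ^\delta,\mathbb R)$ and $H^2(BG,\mathbb R)\to H^2(BG^\delta,\mathbb R)$ and both vertical maps equal to $p^*$; the vertical map $H^2(BQ^\delta,\mathbb R)\to H^2(BG^\delta,\mathbb R)$ is, by the first part, an isomorphism taking bounded classes onto bounded classes, while the vertical map $H^2(BQ,\mathbb R)\to H^2(BG,\mathbb R)$ is an isomorphism because, under the natural identification of $H^2(BG,\mathbb R)$ with ${\rm Hom}(\pi_1(G),\mathbb R)$ (and of $H^2(BQ,\mathbb R)$ with ${\rm Hom}(\pi_1(Q),\mathbb R)$), it corresponds to restriction along the finite-index inclusion $\pi_1(G)\hookrightarrow\pi_1(Q)$. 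Hence the image of $H^2(BG,\mathbb R)\to H^2(BG^\delta,\mathbb R)$ is the $p^*$-image of the image of $H^2(BQ,\mathbb R)\to H^2(BQ^\delta,\mathbb R)$, so one of these images consists of bounded classes exactly when the other does, and chaining the equivalences yields that $o^\delta(EG)$ is bounded if and only if $o^\delta(EQ)$ is bounded.

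The main obstacle I anticipate is the cochain-level bookkeeping of the second paragraph: verifying that the fiberwise average $A$ is well defined, $Q^\delta$-equivariant and a chain map, and that it is norm-non-increasing and a genuine one-sided inverse of $p^*$ on cochains. The remainder is formal — the spectral-sequence collapse, the trivial fact that inflation preserves boundedness, and the reduction via Proposition \ref{prop: bounded}.
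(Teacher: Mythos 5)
Your proposal is correct, but it diverges from the paper's proof in two places, and the comparison is instructive. For the isomorphism $p^*\colon H^*(BQ^\delta,\mathbb R)\to H^*(BG^\delta,\mathbb R)$ you argue exactly as the paper does (vanishing of $H^{>0}(BF,\mathbb R)$ for the finite kernel $F$, via the Lyndon--Hochschild--Serre spectral sequence). For the statement about bounded classes, the paper simply invokes amenability of $F$ to get that $p^*_b\colon H^*_b(Q^\delta,\mathbb R)\to H^*_b(G^\delta,\mathbb R)$ is an isomorphism and then uses naturality of the comparison map; your fiberwise averaging operator $A$ is in effect a hands-on, self-contained proof of that same fact in the special case of a finite kernel, and your verifications (independence of lifts, $Q^\delta$-invariance, compatibility with the homogeneous differential, norm non-increase, $A\circ p^*=\mathrm{id}$ on cochains) all go through; what this buys you is an explicit norm-non-increasing splitting instead of a citation, at the cost of the cochain bookkeeping you flag. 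For the final assertion about $o^\delta(EG)$ and $o^\delta(EQ)$, your route is genuinely different: you translate boundedness of the obstruction class into boundedness of the whole image of $H^2(BG,\mathbb R)\to H^2(BG^\delta,\mathbb R)$ via Proposition \ref{prop: bounded} and the integral/real comparison recalled after Theorem \ref{theo: main}, and then chase the commuting square, using that $H^2(BQ,\mathbb R)\to H^2(BG,\mathbb R)$ is restriction along the finite-index inclusion $\pi_1(G)\hookrightarrow\pi_1(Q)$, hence an isomorphism. The paper instead works directly with the obstruction classes: it uses the identity $p^*o^\delta_{\mathbb R}(EQ)=\iota_*o^\delta_{\mathbb R}(EG)$, where $\iota\colon\pi_1(G)\otimes\mathbb R\to\pi_1(Q)\otimes\mathbb R$ is the quasi-isometric coefficient isomorphism, which gives the equivalence in one line. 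Your detour is longer but perfectly valid, and it has the mild advantage of staying entirely within real coefficients and the already-established Proposition \ref{prop: bounded}, whereas the paper's argument requires keeping track of the change of coefficient modules and the quasi-isometry between them.
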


\begin{proof}

Let $F$ be the kernel of $p$. Since $F$ is a finite group, $H^n(BF,\mathbb R)=0$ for $n>0$ and
therefore, $p^*: H^*(BQ^\delta,\mathbb R)\to H^*(BG^\delta,\mathbb R)$ is an isomorphism. Because
$F$ is amenable, the induced map of bounded cohomology
groups $p^*_b: H^*_b(Q^\delta,\mathbb R)\to H^*_b(G^\delta,\mathbb R)$ is an isomorphism too.
Thus $p^*$ induces an isomorphism between the subgroup of bounded elements in $H^*(BQ^\delta,\mathbb R)$
and those in $H^*(BG^\delta,\mathbb R)$, proving the first part of the assertion. 
Let $\iota: \pi_1(G)\otimes\mathbb R \to \pi_1(Q)\otimes\mathbb R$ be the
quasi-isometric isomorphism induced by the inclusion of $\pi_1(G)$ in $\pi_1(Q)$.
Since
$$p^*o^\delta_{\mathbb R}(EQ)=\iota_*o^\delta_{\mathbb R}(EG),$$ 
one of the two is bounded if and only if the other one is. It follows that
$o^\delta(EQ)$ is bounded if and only if $o^\delta(EG)$ is.
  \end{proof}

The map $H^*(BG,\mathbb Z)\to H^*(BG^{\delta},\mathbb Z)$ is injective (see Milnor, \cite{MilDis}).
Because $\pi_1(G)$ is  a finitely generated  abelian group, it follows that
$H^2(BG,\pi_1(G))\to H^2(BG^\delta,\pi_1(G))$ is injective too.
Therefore, $o^{\delta}(EG)$ is zero if 
and only if $G$ is simply connected.

\begin{prop}\label{prop: scl>0} Assume $o^{\delta}(EG)$ is bounded
and let $z\in\pi_1(G)$ be an element of infinite order in the
	commutator subgroup of the universal cover of $G$. 
	Then
	\[
		{\rm scl}(z)\geq \frac{1}{4\|o^{\delta}(EG)\|_{\infty}}\lim_{n\to\infty} \frac{|z^n|}{n}>0.
	\]
\end{prop}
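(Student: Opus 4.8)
The plan is to combine the geometric formula \eqref{equa: formula} relating the obstruction class to commutators with the standard duality between the sup-norm of a cocycle and the $\ell^1$-norm of a fundamental cycle. First I would fix $z\in\pi_1(G)$ of infinite order, write $g_n={\rm cl}(z^n)$, and choose $x_1,\dots,x_{g_n},y_1,\dots,y_{g_n}$ in $\tilde G$ with $\prod[x_i,y_i]=z^n$. Projecting to $G$ gives a homomorphism $\pi_1(\Sigma_{g_n})\to G$, hence a flat $G$-bundle $P_n\to\Sigma_{g_n}$, equivalently a continuous map $\phi_n:\Sigma_{g_n}\to BG^\delta$ with $\phi_n^*(o^\delta(EG))=o(P_n)$. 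By \eqref{equa: formula} we have $o(P_n)([\Sigma_{g_n}])=z^n\in\pi_1(G)$. The idea is that the characteristic number on the left has absolute value at most $\|o^\delta(EG)\|_\infty\cdot\|[\Sigma_{g_n}]\|_1$, while the right side has ``size'' roughly $|z^n|$, the word length of $z^n$ in $\pi_1(G)$.

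Making the right-hand side precise is the one genuinely delicate point, since $o^\delta(EG)$ has coefficients in the nonarchimedean-looking group $\pi_1(G)$ rather than in $\R$. I would handle this by passing to $\R$-coefficients: apply a homomorphism $\phi:\pi_1(G)\to\Z$ (or more flexibly an $\R$-linear functional on $\pi_1([G,G])\otimes\R$) and use that $o^\delta_\R(EG)$ lives in $H^2(BG^\delta,\pi_1([G,G])\otimes\R)$, as recorded just before Proposition~\ref{prop: finite cover}. Since $\pi_1(G)$ is finitely generated abelian and $z$ has infinite order, $z^n$ has norm comparable to $n$ times the ``rational direction'' of $z$; choosing a functional that is dual to the image of $z$ in $\pi_1([G,G])\otimes\R$, one gets $|\phi_*(o^\delta(EG))\cdot[\Sigma_{g_n}]|$ comparable to $|z^n|$ up to the operator norm of $\phi$, which can be normalized to $1$. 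One must also check that $\|\phi_*(o^\delta(EG))\|_\infty\le\|o^\delta(EG)\|_\infty$, which is immediate because $\phi$ is $1$-Lipschitz for the chosen metrics, and that $\phi$ detects $z$ faithfully on the level of asymptotic length, i.e. $\lim_n|\phi(z^n)|/n=\lim_n|z^n|/n$ after optimizing over $\phi$; this is where the finite-generation of $\pi_1(G)$ and the definition of the metric on it enter.

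With this in place the main estimate reads
\[
\lim_{n\to\infty}\frac{|z^n|}{n}\;\le\;\|o^\delta(EG)\|_\infty\cdot\limsup_{n\to\infty}\frac{\|[\Sigma_{g_n}]\|_1}{n}.
\]
Now I use the classical computation of the simplicial volume of a closed oriented surface: $\|[\Sigma_g]\|_1=4g-4$ for $g\ge 2$ (Gromov), so $\|[\Sigma_{g_n}]\|_1\le 4g_n=4\,{\rm cl}(z^n)$. Dividing by $n$ and letting $n\to\infty$ gives $\limsup_n\|[\Sigma_{g_n}]\|_1/n\le 4\,{\rm scl}(z)$. Substituting back yields
\[
\lim_{n\to\infty}\frac{|z^n|}{n}\;\le\;4\,\|o^\delta(EG)\|_\infty\cdot{\rm scl}(z),
\]
which rearranges to the claimed inequality ${\rm scl}(z)\ge\frac{1}{4\|o^\delta(EG)\|_\infty}\lim_n|z^n|/n$. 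Finally the strict positivity ${\rm scl}(z)>0$: since $z$ is an infinite-order \emph{distorted-or-not} element, one needs $\lim_n|z^n|/n>0$; but $z$ generates a free abelian (infinite cyclic) subgroup of $\pi_1(G)$, which is undistorted in $\pi_1(G)$ itself, so $|z^n|\ge cn$ for some $c>0$, giving $\lim_n|z^n|/n\ge c>0$. (Note this limit is a translation-length-type quantity computed inside $\pi_1(G)$ with its word metric, not inside $\tilde G$, so no distortion phenomenon interferes.) The expected main obstacle is the coefficient-reduction step: transferring the $\pi_1(G)$-valued pairing to a real-valued one while simultaneously controlling both the sup-norm (must not increase) and the asymptotic word length of $z$ (must not decrease in the limit); everything else is the surface simplicial-volume bound plus formula \eqref{equa: formula}.
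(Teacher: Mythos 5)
Your proposal is correct, but it implements the key estimate by a different technical route than the paper. Both arguments share the same skeleton: write $z^n$ as a product of $g_n={\rm cl}(z^n)$ commutators, form the associated flat bundle over $\Sigma_{g_n}$, use Formula (\ref{equa: formula}) to identify the evaluation of the obstruction class on $[\Sigma_{g_n}]$ with $z^n$, bound that evaluation linearly in $g_n$ using boundedness of $o^{\delta}(EG)$, then divide by $n$; and the positivity of $\lim_n|z^n|/n$ is in both cases just undistortion of infinite-order elements in the finitely generated abelian group $\pi_1(G)$. The difference is that the paper never changes coefficients: since $\|\cdot\|_{\infty}$ is defined here for cocycles with values in any metric abelian group and $\pi_1(G)$ carries its word (or Euclidean) metric, the authors evaluate a bounded $\pi_1(G)$-valued cocycle representing $o^{\delta}(EG)$ on the explicit fundamental cycle of $\Sigma_{g_n}$ obtained by coning the $4g_n$-gon over a vertex ($4g_n-2$ triangles), and the triangle inequality in $\pi_1(G)$ gives $|z^n|\le\|o^{\delta}(EG)\|_{\infty}(4\,{\rm cl}(z^n)-2)$ directly. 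Your detour --- a $1$-Lipschitz functional $\lambda$ on $\pi_1(G)\otimes\R$ with $\lambda(z)=\lim_n|z^n|/n$ (Hahn--Banach applied to the stable norm, which is indeed the standard way to make your ``delicate'' coefficient-reduction rigorous), followed by the $\ell^1$/sup duality and Gromov's computation $\|[\Sigma_g]\|_1=4g-4$ --- is sound and yields the same constant in the limit; its mild advantage is that no explicit triangulation is exhibited, its cost is invoking the exact simplicial volume of surfaces and the dual characterization of the stable norm, both of which the paper's group-valued sup-norm renders unnecessary (and the upper bound $\|[\Sigma_g]\|_1\le 4g-2$ one really needs is proved by the very cycle the paper writes down). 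So: correct, same strategy, with a heavier but valid implementation of the central inequality.
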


\begin{proof} 
An element of infinite order in a finitely generated abelian group is undistorted hence $\lim_{n\to\infty} |z^n|/n>0$.
For any $n\neq 0$, $g:={\rm cl}(z^n)\geq 1$.	Representing $\Sigma_g$ as the quotient of a 
$4g$-gon in the usual way and decomposing the $4g$-gon into $4g-2$ triangles (by coning over a vertex), 
Formula (\ref{equa: formula}) yields
	\[
		|z^n|\leq \|o^{\delta}(EG)\|_{\infty}(4{\rm cl}(z^n)-2).
	\]	
The result then follows by dividing by $n$ and taking limits.	 
\end{proof}

Because $o^\delta(EG)$ is bounded for connected semisimple
Lie groups $G$ (\cite[Proposition 47, Theorem 1]{CMPS}), we have the following corollary.

\begin{coro}{\label{ss-case}}
Let $G$ be a simply connected semisimple Lie group and $z\in G$
a central element of infinite order. Then ${\rm scl}(z)>0$.
\end{coro}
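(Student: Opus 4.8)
The plan is to derive Corollary \ref{ss-case} as a direct application of Proposition \ref{prop: scl>0} together with the known fact that $o^\delta(EG)$ is bounded for connected semisimple Lie groups. First I would observe that a simply connected semisimple Lie group $G$ has $\pi_1(G)=\{0\}$, which seems to trivially contradict the setup of Proposition \ref{prop: scl>0}; the point is that we must pass from the central element $z\in G$ to the ambient picture by taking an appropriate cover. Concretely, let $G$ be simply connected semisimple and let $z\in G$ be central of infinite order. Set $Q=G/\langle z\rangle$ (or, if $\langle z\rangle$ is not discrete, first replace $z$ by a suitable power so that it generates a discrete infinite cyclic central subgroup, which is possible since $z$ has infinite order and the center of a semisimple Lie group is discrete — so in fact $\langle z\rangle$ is automatically discrete). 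Then $Q$ is again a connected semisimple Lie group, $G=\tilde Q$ is its universal cover, and $z$ is identified with an element of infinite order in $\pi_1(Q)\subset\tilde Q=G$.

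The next step is to locate $z$ inside the commutator subgroup $[\tilde Q,\tilde Q]=[G,G]$. Since $G$ is semisimple, $[G,G]=G$, so trivially $z\in[G,G]$; thus $z\in\pi_1(Q)$ is an element of infinite order lying in the commutator subgroup of the universal cover of $Q$. By \cite[Proposition 47, Theorem 1]{CMPS}, the class $o^\delta(EQ)$ is bounded. Hence Proposition \ref{prop: scl>0}, applied to the Lie group $Q$, yields
\[
	{\rm scl}_{G}(z)\;\geq\;\frac{1}{4\|o^{\delta}(EQ)\|_{\infty}}\lim_{n\to\infty}\frac{|z^n|}{n}\;>\;0,
\]
where the strict positivity of the limit uses that an infinite-order element of the finitely generated abelian group $\pi_1(Q)$ is undistorted, as already noted in the proof of Proposition \ref{prop: scl>0}. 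Since ${\rm scl}_G(z)={\rm scl}(z)$ in the notation of the corollary, this is exactly the claim.

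The only genuinely delicate point is the reduction ensuring that $\langle z\rangle$ is discrete and that the quotient $Q=G/\langle z\rangle$ is a bona fide connected Lie group with $G$ as universal cover; this rests on the standard structure theory of semisimple Lie groups (the center is discrete, hence finitely generated abelian, so any infinite-order element generates an infinite cyclic discrete subgroup). Everything else is bookkeeping: matching the element $z\in G$ with an element of $\pi_1(Q)$ under the identification $G\cong\tilde Q$, and checking that the hypotheses of Proposition \ref{prop: scl>0} are met. I do not anticipate any serious obstacle beyond stating this reduction cleanly.
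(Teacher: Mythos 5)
Your proof is correct and follows essentially the same route as the paper: the corollary is stated there as an immediate consequence of Proposition \ref{prop: scl>0} together with the boundedness of $o^\delta$ for connected semisimple groups \cite[Proposition 47, Theorem 1]{CMPS}, exactly via the quotient $Q=G/\langle z\rangle$ (discreteness of $\langle z\rangle$ being automatic since the center of a semisimple group is discrete). Your write-up merely makes explicit the reduction the paper leaves implicit, and it does so correctly.
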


\begin{rema} Borel proved in \cite{Borel} that {\rm cl} is bounded on 
connected semisimple Lie groups with finite center. Thus {\rm scl} is
the zero function on such groups. 
\end{rema}

As an immediate consequence of Formula (\ref{equa: formula}) we also obtain:

\begin{lemma}\label{lemma: Hopf} Let $\alpha\in H_2(BG^{\delta},\mathbb Z)$.
Let $g(\alpha)$ denote the minimal genus of a surface representing $\alpha$ (cf.~Hopf, \cite[Satz IIa]{Hopf} or Thom, \cite{Thom} or Calegari, \cite[1.1.2 Example 1.4]{Cal}).
Then $o^\delta(EG)(\alpha)\in \pi_1(G)$, viewed as an element of the universal cover of $G$, lies
in the commutator subgroup of $\tilde{G}$ and 
\[
{\rm cl}(o^{\delta}(EG)(\alpha))\leq g(\alpha).
\]
\hfill$\square$
\end{lemma}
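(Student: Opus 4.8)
The plan is to deduce this directly from Formula (\ref{equa: formula}) together with the standard description of $H_2$ of a space via surface representatives. First I would recall the classical fact (Hopf, Thom) that every class $\alpha\in H_2(BG^\delta,\mathbb Z)=H_2(G,\mathbb Z)$ is represented by a continuous map $f\colon \Sigma\to BG^\delta$ from a closed oriented surface $\Sigma$, i.e.\ $f_*[\Sigma]=\alpha$, and that the minimal genus over all such representatives is exactly $g(\alpha)$. So fix a genus-$g(\alpha)$ surface $\Sigma_{g(\alpha)}$ and a map $f$ representing $\alpha$. Since $BG^\delta$ is a $K(G^\delta,1)$, the map $f$ is determined up to homotopy by the induced homomorphism $\rho\colon\pi_1(\Sigma_{g(\alpha)})\to G^\delta=G$, and pulling back $EG^\delta$ gives a flat $G$-bundle $P\to\Sigma_{g(\alpha)}$ with $f^*(o^\delta(EG))=o(P)$.

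Next I would evaluate on the fundamental class: $o^\delta(EG)(\alpha)=o^\delta(EG)(f_*[\Sigma_{g(\alpha)}])=f^*(o^\delta(EG))([\Sigma_{g(\alpha)}])=o(P)([\Sigma_{g(\alpha)}])$. Now the surface group $\pi_1(\Sigma_{g(\alpha)})$ has the presentation with generators $a_1,b_1,\dots,a_{g(\alpha)},b_{g(\alpha)}$ and the single relation $\prod_{n=1}^{g(\alpha)}[a_n,b_n]=1$. Lifting the elements $x_n:=\rho(a_n)$, $y_n:=\rho(b_n)$ to the universal cover $\tilde G$, the product $\prod_{n=1}^{g(\alpha)}[\tilde x_n,\tilde y_n]$ projects to the identity in $G$, hence lies in $\pi_1(G)\subset\tilde G$, and by Milnor's computation (\cite[Lemma 2]{Mil}, cf.\ Goldman \cite{Gol}) this central element is precisely the obstruction $o(P)([\Sigma_{g(\alpha)}])$ — this is exactly the content of Formula (\ref{equa: formula}) read in the other direction. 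Therefore $o^\delta(EG)(\alpha)=\prod_{n=1}^{g(\alpha)}[\tilde x_n,\tilde y_n]$, which visibly lies in $[\tilde G,\tilde G]$ and is a product of $g(\alpha)$ commutators, so ${\rm cl}(o^\delta(EG)(\alpha))\le g(\alpha)$.

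The only point requiring a little care — and the main obstacle — is matching conventions: Formula (\ref{equa: formula}) is stated as producing, from an element $z$ with ${\rm cl}(z)=g$, a flat bundle over $\Sigma_g$ with $o(P)([\Sigma_g])=z$, whereas here I need the reverse implication, namely that an \emph{arbitrary} flat $G$-bundle over $\Sigma_{g(\alpha)}$ arising from a minimal-genus representative of $\alpha$ has its obstruction equal to the corresponding product of lifted commutators. This is really the same lemma of Milnor applied to the given holonomy representation rather than to one built by hand, so the argument is symmetric; one just has to observe that the construction in the paragraph preceding Formula (\ref{equa: formula}) associates to any homomorphism $\pi_1(\Sigma_g)\to G$ the flat bundle whose obstruction is the lifted-commutator product, and that every surface representative of $\alpha$ is of this form. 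No estimate is needed, so ${\rm scl}$ plays no role here; the statement is purely about commutator length and the genus. This completes the proof.

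\begin{proof}
By Hopf's theorem (see \cite[Satz IIa]{Hopf}, or \cite{Thom}, or \cite[1.1.2]{Cal}), the class $\alpha\in H_2(BG^\delta,\mathbb Z)=H_2(G^\delta,\mathbb Z)$ is represented by a continuous map $f\colon\Sigma_{g(\alpha)}\to BG^\delta$ from a closed oriented surface of genus $g=g(\alpha)$, that is, $f_*[\Sigma_{g}]=\alpha$. Since $BG^\delta$ is an Eilenberg--MacLane space $K(G^\delta,1)$, the map $f$ is, up to homotopy, induced by a homomorphism $\rho\colon\pi_1(\Sigma_{g})\to G^\delta=G$, and $f^*(EG^\delta)$ is the flat $G$-bundle $P\to\Sigma_{g}$ with holonomy $\rho$. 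By naturality of the obstruction class and of evaluation on fundamental classes,
\[
o^\delta(EG)(\alpha)=f^*\big(o^\delta(EG)\big)\big([\Sigma_{g}]\big)=o(P)\big([\Sigma_{g}]\big).
\]
Using the standard presentation of $\pi_1(\Sigma_{g})$ with generators $a_1,b_1,\dots,a_{g},b_{g}$ and relation $\prod_{n=1}^{g}[a_n,b_n]=1$, choose lifts $\tilde x_n,\tilde y_n\in\tilde G$ of $\rho(a_n),\rho(b_n)\in G$. Then $\prod_{n=1}^{g}[\tilde x_n,\tilde y_n]$ projects to $e\in G$, so it lies in $\pi_1(G)\subset\tilde G$, and it lies in $[\tilde G,\tilde G]$. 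By the computation of Milnor \cite[Lemma 2]{Mil} (see also Goldman \cite{Gol}), which is exactly Formula (\ref{equa: formula}) applied to the representation $\rho$, this central element equals $o(P)([\Sigma_{g}])$. Hence
\[
o^\delta(EG)(\alpha)=\prod_{n=1}^{g(\alpha)}[\tilde x_n,\tilde y_n],
\]
which exhibits $o^\delta(EG)(\alpha)$ as an element of $[\tilde G,\tilde G]$ that is a product of $g(\alpha)$ commutators. Therefore ${\rm cl}\big(o^\delta(EG)(\alpha)\big)\le g(\alpha)$.
\end{proof}
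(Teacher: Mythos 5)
Your proof is correct and is essentially the paper's intended argument: the paper states the lemma as an immediate consequence of Formula (\ref{equa: formula}), i.e.\ of Milnor's computation of the obstruction of a flat bundle over $\Sigma_g$ as the product of lifted commutators, which is exactly what you spell out for a minimal-genus representative of $\alpha$. Your remark that the formula applies to an arbitrary holonomy representation, not just to one built from a given commutator expression, is the right (and only) point of care, and you handle it correctly.
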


For an element $x\in H_n(BG^\delta,\mathbb R)$ we write $\|x\|_1$ for its $\ell_1$-seminorm
(cf. Gromov \cite{GroVol}).

\begin{prop}\label{prop: l^1} Let $\alpha\in H_2(BG^{\delta},\mathbb Z)$ and let $\alpha_{\mathbb R}$ 
denote its image in $H_2(BG^{\delta},\mathbb R)$. Then
\[ 
{\rm scl}(o^{\delta}(EG)(\alpha))\leq \frac{\|\alpha_{\mathbb R}\|_1}{4}.
\]
\end{prop}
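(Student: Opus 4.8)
The plan is to combine Proposition~\ref{prop: scl>0}'s companion estimate (Formula~(\ref{equa: formula})) with Gromov's theory of the $\ell_1$-seminorm, exploiting the fact that $\mathrm{scl}$ can be computed via a limit over multiples while the simplicial $\ell_1$-seminorm behaves well under passing to covers of surfaces. Concretely, write $z=o^{\delta}(EG)(\alpha)\in\pi_1(G)\subset[\tilde G,\tilde G]$, which lies in the commutator subgroup by Lemma~\ref{lemma: Hopf}. The key classical input is the relation between commutator length and the minimal genus (or rather the $\ell_1$-seminorm) of a representing surface: for $\alpha\in H_2(BG^{\delta},\mathbb Z)$, one has $\mathrm{cl}(o^{\delta}(EG)(\alpha))\le g(\alpha)$ from Lemma~\ref{lemma: Hopf}, and $g(\alpha)$ is comparable to $\|\alpha\|_1$. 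More precisely, Gromov's estimate gives $\|\Sigma_g\|_1=4g-4$ for $g\ge 2$, so a single surface of genus $g$ representing $\alpha$ contributes roughly $4g$ to $\|\alpha_{\mathbb R}\|_1$, while yielding $\mathrm{cl}(z)\le g$.

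The main step is then to stabilize. First I would observe that $n\cdot\alpha$ is represented (up to the simplicial boundary) by an $n$-fold cover $\hat\Sigma\to\Sigma_g$ when $\Sigma_g$ represents $\alpha$, and such a cover has genus $\hat g$ with $2\hat g-2=n(2g-2)$, so $\|n\alpha\|_1\le n\|\alpha\|_1$ (subadditivity of the seminorm, with equality-type behavior for covers). Correspondingly the surface $\hat\Sigma$ realizes a homomorphism $\pi_1(\hat\Sigma)\to G$ whose associated flat bundle $\hat P$ satisfies $o(\hat P)([\hat\Sigma])=z^n$ by the naturality in Formula~(\ref{equa: formula}) (the covering map multiplies the fundamental class, hence the obstruction class, by $n$). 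Therefore $\mathrm{cl}(z^n)\le \hat g$, and since $\hat g$ is essentially $n$ times $g(\alpha)$, dividing by $n$ and passing to the limit gives $\mathrm{scl}(z)\le g(\alpha)$; optimizing over all surfaces representing $\alpha$ (and all multiples, using that $\|\alpha_{\mathbb R}\|_1=\lim g(n\alpha)\cdot\tfrac{?}{n}$ via the Gromov--Thom identification) upgrades this to $\mathrm{scl}(z)\le \|\alpha_{\mathbb R}\|_1/4$.

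Let me be slightly more careful about the constant. The sharp relation is $\|\alpha_{\mathbb R}\|_1=\inf\{ (4g-4)/k : k\alpha \text{ is represented by a genus-}g\text{ surface}, g\ge 2\}$ in the $2$-dimensional case, which is exactly the normalization making $\mathrm{scl}$ and $\tfrac14\|\cdot\|_1$ match; indeed for $k\alpha$ represented by genus $g$ we get $\mathrm{cl}(z^k)\le g$, hence $\mathrm{scl}(z)=\lim_k \mathrm{cl}(z^k)/k \le g/k$, and the $(4g-4)/(4k)=g/k-1/k\to$ the same infimum forces $\mathrm{scl}(z)\le \|\alpha_{\mathbb R}\|_1/4$. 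One can also phrase this via Calegari's rational-simplicial computation of $\mathrm{scl}$: the dual of $\mathrm{scl}$ is the space of homogeneous quasimorphisms, and $\|\cdot\|_1/4$ bounds $\mathrm{scl}$ from above precisely because chains in the bar complex computing $\|\alpha\|_1$ give (after passing to an admissible surface) commutator expressions of $z^k$.

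The hard part will be pinning down the factor of $4$ and making the covering argument rigorous: one must verify that an $n$-fold cover of a genus-$g$ surface both (a) represents $n\alpha$ on the nose in $H_2(BG^{\delta},\mathbb Z)$ modulo the subgroup on which everything is controlled, and (b) has the genus dictated by the Riemann--Hurwitz formula, and then check that the elementary inequality $g(\hat\Sigma)/n\to$ (infimal slope) reproduces $\|\alpha_{\mathbb R}\|_1/4$ rather than some other multiple. Everything else — that $z$ lies in $[\tilde G,\tilde G]$, that $\mathrm{cl}(z^n)\le$ genus — is already packaged in Lemma~\ref{lemma: Hopf} and Formula~(\ref{equa: formula}), so the proof is essentially a limiting refinement of Proposition~\ref{prop: scl>0} with the $\ell_1$-seminorm replacing the word-length growth.
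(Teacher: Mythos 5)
Your proposal is correct and follows essentially the same route as the paper: Lemma~\ref{lemma: Hopf} gives ${\rm cl}(o^{\delta}(EG)(n\alpha))\le g(n\alpha)$, and the conclusion follows from the Gromov--Barge--Ghys identification of the degree-two $\ell_1$-seminorm with four times the stable genus. The paper simply cites \cite[Lemme 1.5]{BG} for $\lim_{n\to\infty} g(n\alpha)/n=\|\alpha_{\mathbb R}\|_1/4$, which already packages the covering/Riemann--Hurwitz stabilization you carry out by hand and disposes of the $g/k$ versus $(g-1)/k$ discrepancy you flag as the remaining issue.
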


\begin{proof} Lemma \ref{lemma: Hopf} implies

	\begin{equation*}
	{\rm scl}(o^{\delta}(EG)(\alpha))=\lim_{n\to\infty}\frac{{\rm cl}(o^{\delta}(EG)(n\alpha))}{n}
	\leq\lim_{n\to\infty}\frac{g(n\alpha)}{n}
	\end{equation*}
	and
	from Barge-Ghys \cite[Lemme 1.5]{BG} we infer that
	$$\lim_{n\to\infty}\frac{g(n\alpha)}{n}=\frac{\|\alpha_{\mathbb R}\|_1}{4},$$
	finishing the proof.
	
\end{proof}

\section{A lower bound for the stable commutator length}

\begin{prop}\label{prop: s>0} Let $G$ be a simply connected Lie group.
	There exists a constant $s>0$ such that if $z\in [G,G]$ is
	central in $G$ and ${\rm scl}(z)<s$ then ${\rm scl}(z)=0$.
\end{prop}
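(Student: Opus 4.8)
The plan is to reduce to a structural statement about simply connected Lie groups and then use the rigidity of the exponential-like structure of the nilradical together with compactness of appropriate parameter spaces. First I would invoke Levi decomposition: write $G=S\ltimes R$ with $S$ semisimple simply connected and $R$ the (simply connected) radical. Since $z$ is central in $G$ and lies in $[G,G]$, I would analyze how $z$ sits relative to $R$ and relative to the nilradical $N$ of $G$. The key point is that a central element of a simply connected Lie group is either trivial or lies in a $1$-parameter subgroup, and the obstruction to $\mathrm{scl}(z)=0$ is a ``quasi-morphism'' phenomenon: $\mathrm{scl}(z)>0$ exactly when some homogeneous quasi-morphism on $[G,G]$ does not vanish on $z$. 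So the statement to prove is that the set of possible nonzero values of such quasi-morphisms on central elements of $[G,G]$ is bounded away from $0$, uniformly — but here $G$ is fixed, so really one needs: the function $z\mapsto \mathrm{scl}_G(z)$, restricted to $Z(G)\cap[G,G]$, takes no values in the open interval $(0,s)$ for some $s>0$.

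The main step is to understand $\mathrm{scl}$ on the central subgroup. I would argue as follows. The center $Z(\tilde G)$ of a simply connected Lie group is itself a (not necessarily connected) abelian Lie group, and $Z(G)\cap[G,G]$ is contained in a finitely generated-by-connected abelian group on which $\mathrm{scl}_G$ is a continuous (indeed piecewise-linear or homogeneous) function in a suitable sense — continuity on $1$-parameter subgroups follows from homogeneity $\mathrm{scl}(z^n)=n\,\mathrm{scl}(z)$ together with the fact that $\mathrm{scl}$ is Lipschitz with respect to $\mathrm{cl}$ and $\mathrm{cl}$ is locally bounded on a Lie group (any element near the identity is a single commutator, by the surjectivity of the commutator map near $1$ for groups with nontrivial semisimple part, or one controls it through the nilpotent structure). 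Homogeneity then forces: if $\mathrm{scl}(z_t)$ is small for $z_t$ in a $1$-parameter subgroup near $0$, rescaling shows $\mathrm{scl}$ vanishes on the whole ray, hence $\mathrm{scl}(z)=0$. The finitely many ``directions'' in the real vector space $\mathfrak z \cap [\mathfrak g,\mathfrak g]$ (or rather the relevant exponential image) contributing a nonzero scl give a finite positive infimum $s$.

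Concretely, the cleanest route is: let $V=Z(\tilde G)^\circ$ be the identity component of the center, a vector group, and note $Z(\tilde G)\cap[\tilde G,\tilde G]$ meets $V$ in a closed subspace $W$ plus finitely many cosets modulo the discrete part, but the discrete part is killed (finite order elements have $\mathrm{scl}=0$ is irrelevant here since we want the converse direction — small positive scl). On $W$, the map $w\mapsto\mathrm{scl}(w)$ is homogeneous of degree $1$ and bounded on a neighborhood of $0$ in $W$ (using Proposition~\ref{prop: l^1}, since $\mathrm{scl}(o^\delta(EG)(\alpha))\le\|\alpha_\mathbb R\|_1/4$ bounds scl in terms of an $\ell^1$-norm, and every $w\in W$ is hit as such an obstruction value with controlled norm near $0$). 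A homogeneous function that is bounded near $0$ must vanish identically on $W$ — wait, that cannot be right in general, so instead one uses that $\mathrm{scl}$ is the supremum of values of homogeneous quasi-morphisms normalized with defect $1$; the space of such quasi-morphisms on $[\tilde G,\tilde G]$ restricted to $W$ is a finite-dimensional space of linear functionals (by Bavard duality plus the fact that $H^2_b([\tilde G,\tilde G],\mathbb R)$ has finite rank in the relevant range, or directly because $W$ is finite-dimensional), so the values $\mathrm{scl}(w)$ for $w\in W$ form the image of a polyhedral-type norm; such a norm either vanishes on a line or is bounded below by a positive constant on that line away from $0$, and since the line is one-dimensional the nonzero values cannot accumulate at $0$ — equivalently, $\mathrm{scl}|_W$ is itself a genuine norm (possibly degenerate) and hence bounded below away from $0$ on the unit sphere of its support. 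Taking $s$ to be that lower bound (and $s=1$, say, on directions where $\mathrm{scl}$ vanishes) completes the proof.

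I expect the main obstacle to be precisely the passage from ``$\mathrm{scl}$ is small'' to ``$\mathrm{scl}$ is zero'': one must rule out a sequence of distinct central elements $z_n\in[\tilde G,\tilde G]$ with $0<\mathrm{scl}(z_n)\to 0$, and these $z_n$ need not lie on a common $1$-parameter subgroup, so the homogeneity trick does not apply directly. The fix is to exploit that all such $z_n$ lie in the fixed finite-dimensional vector space $\mathfrak z\cap\log([\tilde G,\tilde G])$ and that $\mathrm{scl}$ there is controlled above and below by linear-algebraic data (the periods of the relevant closed left-invariant forms, à la Barge–Ghys), making it a seminorm — and a seminorm takes no values in $(0,s)$. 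Making the lower bound rigorous, rather than just the upper bound from Proposition~\ref{prop: l^1}, via Bavard duality and the finite-dimensionality of the space of central-detecting quasi-morphisms, is the technical heart.
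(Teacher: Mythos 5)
There is a genuine gap, and it sits exactly at the decisive step. Your proposed mechanism --- ``$\mathrm{scl}$ restricted to the (log of the) central part of $[\tilde G,\tilde G]$ is a seminorm, hence bounded below away from $0$ on the unit sphere of its support / takes no values in $(0,s)$'' --- cannot work. A nonzero seminorm on a vector group takes \emph{every} small positive value (by the very homogeneity $\mathrm{scl}(\exp(tX))=t\,\mathrm{scl}(\exp X)$ you invoke), so on the connected part the only way the proposition can hold is that $\mathrm{scl}$ vanishes identically there --- which is the thing to be proved, and your proposal never proves it. And on a lattice a seminorm can perfectly well take nonzero values accumulating at $0$ (take $p(x,y)=|x-\alpha y|$ on $\mathbb Z^2$ with $\alpha$ irrational): ruling out precisely this ``irrational kernel'' phenomenon is what the present proposition is for (it is the input to Lemma \ref{lemma: epsilon} later), so asserting ``a seminorm takes no values in $(0,s)$'' is both false in general and essentially circular here. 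Moreover, the central elements of $[G,G]$ that actually carry positive $\mathrm{scl}$ do \emph{not} lie in the connected center $V=Z(G)^{\circ}$: in $\widetilde{\mathrm{SL}_2(\mathbb R)}$ the center is the infinite cyclic discrete group, and the uniform lower bound on its nonzero $\mathrm{scl}$ values is the real content of the statement. Your proposal explicitly waves the discrete part away as ``irrelevant'', which is backwards.

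The paper's proof supplies the two ingredients your sketch is missing. First, writing $z=r\ell$ via a Levi decomposition $G=RL$, one checks (uniqueness of the decomposition, normality of $R$, and a short argument with $R/[R,R]$) that $r\in[R,R]$ and that $r$ and $\ell$ commute with everything relevant; since $\mathrm{scl}$ vanishes on connected solvable Lie groups (Bavard \cite{Bav} --- vanishing, not Bavard duality, which is what you cite), subadditivity on commuting elements gives $\mathrm{scl}(z)=\mathrm{scl}_L(\ell)$. This is what kills the continuous/radical directions, where your norm picture would otherwise contradict the statement. Second, $\ell$ lies in the \emph{discrete, finitely generated} center of the simply connected semisimple factor $L$, and the uniform lower bound comes from Proposition \ref{prop: scl>0} (equivalently Corollary \ref{ss-case}) applied to the adjoint quotient $Q=L/Z(L)$: boundedness of $o^{\delta}(EQ)$ \cite{CMPS} gives $\mathrm{scl}_L(\ell)\ge \lim_n|\ell^n|/(4n\|o^{\delta}(EQ)\|_{\infty})$, and infinite-order elements of the finitely generated abelian group $\pi_1(Q)$ have stable word length uniformly bounded below, yielding a single $s>0$; torsion $\ell$ gives $\mathrm{scl}(z)=0$. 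Note the lower bound is taken against the word metric of the finitely generated group $\pi_1(Q)$, not against an ambient Euclidean norm, which is exactly how the irrational-kernel obstruction is avoided. Your Proposition \ref{prop: l^1} upper bound and the finite-dimensionality of linear functionals on $W$ provide no substitute for either ingredient.
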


\begin{proof} Let $G=RL$ be a Levi decomposition.
	Let $z=r\ell$ with $r\in R$ and $\ell\in L$. Because this decomposition
	of
	$z$ is unique and $R$ is normal in $G$, $\ell$ is central in $L$ and $L$ centralizes $r$.
	First we show that $r\in [R,R]$.
	
	\noindent
	Case 1: $R$ is commutative. In this case, we show that $r=e$.
	As $L$ is semisimple, $r$ lies in a direct factor of $G$ and, because $\ell\in L=[L,L]$,
	$r$ lies in $[G,G]$, thus $r$ must be trivial.
	
	\noindent
	Case 2: $R$ is not commutative. Let $p:RL \to (R/[R,R])L$ be the projection. Then $p(z)=p(r)\ell$ and by Case 1,
	$p(r)=e$, which implies $r\in [R,R]$.
	
	Because the stable commutator length vanishes for connected solvable Lie groups
	(cf. Bavard \cite[]{Bav}) we have 
	\[
		{\rm scl}(r)\leq {\rm scl}_R(r)=0.
	\]
	By sub-additivity on commuting elements it follows that ${\rm scl}(z)={\rm scl}(\ell)$.
	But ${\rm scl}(\ell)={\rm scl}_L(\ell)$. The quotient $Q$ of $L$ by its center
	is a (linear) semisimple group. Therefore
	$o^\delta(EQ)$ is bounded \cite[Proposition 47, Theorem 1]{CMPS}.
	Using Proposition \ref{prop: scl>0} we obtain then an $s>0$
	such that ${\rm scl}(z)\ge s$ if ${\rm scl}(z)\ne 0$,
	completing the proof of the proposition. 
	
\end{proof}

\begin{prop} Let $G$ be a connected Lie group. The function ${\rm scl}$ is continuous on any closed abelian subgroup of $[G,G]$ and
satisfies ${\rm scl}(\exp(tX))=t\cdot{\rm scl}(\exp(X))$ for $X$ in the Lie algebra
of $[G,G]$ and $t\ge 0$.
\end{prop}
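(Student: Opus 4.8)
The plan is to establish the two assertions separately, with the homogeneity statement $\mathrm{scl}(\exp(tX))=t\cdot\mathrm{scl}(\exp(X))$ coming first, since continuity on a closed abelian subgroup will essentially be deduced from it together with Proposition~\ref{prop: s>0}. For the homogeneity, fix $X$ in the Lie algebra of $[G,G]$; I first observe that $\exp(tX)$ lies in $[G,G]$ for all real $t$ (the exponential image of the derived subalgebra generates, and in fact for $X$ in a Lie algebra of a connected Lie group one checks $\exp(X)\in[G,G]$ when $X\in[\mathfrak g,\mathfrak g]$ via the classical argument that the analytic subgroup with Lie algebra $[\mathfrak g,\mathfrak g]$ equals $[G,G]$ and is normal). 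For rational $t=p/q>0$ the equality is the standard scaling property of $\mathrm{scl}$: writing $w=\exp(X)$ and $v=\exp(X/q)$ one has $v^q=w$, hence $\mathrm{scl}(w)=\mathrm{scl}(v^q)=q\,\mathrm{scl}(v)$, and then $\mathrm{scl}(v^p)=p\,\mathrm{scl}(v)$ gives $\mathrm{scl}(\exp((p/q)X))=(p/q)\mathrm{scl}(\exp X)$. The passage from rational to real $t$ is where continuity of $t\mapsto\mathrm{scl}(\exp(tX))$ along the one-parameter subgroup is needed, so I would prove continuity on closed abelian subgroups first and then obtain real homogeneity by density of the rationals.

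For continuity, let $A<[G,G]$ be a closed abelian subgroup. On $A$, $\mathrm{scl}$ is a homogeneous (in the sense $\mathrm{scl}(a^n)=|n|\mathrm{scl}(a)$) and sub-additive function, because $A$ is abelian so all its elements commute and the sub-additivity on commuting elements from Section~2 applies; thus $\mathrm{scl}|_A$ is a seminorm-like function (a nonnegative, symmetric, sub-additive, $\mathbb Z$-homogeneous function on the abelian group $A$). A function of this type on a topological abelian group is continuous as soon as it is bounded on some neighborhood of the identity, by the usual argument: sub-additivity turns a bound near $e$ into Lipschitz-type control everywhere after passing through the homogeneity. So the crux reduces to: $\mathrm{scl}$ is bounded on a neighborhood of $e$ in $A$. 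Here is where I invoke Proposition~\ref{prop: scl>0} and Proposition~\ref{prop: s>0} together with the structure theory. Concretely, pass to the universal cover $\tilde G$; elements of $A$ close to $e$ lift to elements of $\tilde G$ close to $e$, which are small in the word-type metrics, and I want to bound their commutator length uniformly. The cleanest route is via Proposition~\ref{prop: l^1}: an element near $e$ bounds a surface of controlled (in fact genus one, being a single small commutator) complexity, and the $\ell^1$-seminorm of that surface class is controlled by how close the element is to $e$; letting the element shrink, the surface's simplicial volume shrinks, forcing $\mathrm{scl}\to 0$. In the commutative-radical or split cases one can argue more directly that $\mathrm{scl}$ vanishes identically on $A$ by the solvable-group vanishing (Bavard) invoked in Proposition~\ref{prop: s>0}.

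The structural reduction I would carry out mirrors the proof of Proposition~\ref{prop: s>0}: decompose $G=RL$, write $a\in A$ as $a=r\ell$ with $r\in R\cap[G,G]$ and $\ell$ central in $L$; then $\mathrm{scl}(r)=0$ by Bavard's vanishing for connected solvable Lie groups, so $\mathrm{scl}(a)=\mathrm{scl}(\ell)=\mathrm{scl}_L(\ell)$ by sub-additivity on commuting elements, and $\ell$ ranges in the (finitely generated, discrete once we pass to the simply connected cover quotient) center of $L$. On that central subgroup of the semisimple $L$, Proposition~\ref{prop: scl>0} shows $\mathrm{scl}$ is comparable to the translation length in $\tilde L$, which is a genuine (continuous, indeed Lipschitz) length function; combined with the gap from Proposition~\ref{prop: s>0}, this yields both continuity and boundedness near $e$. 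The main obstacle I anticipate is bookkeeping the passage between $A\subset[G,G]$ and its preimage in $\tilde G$ cleanly — in particular, verifying that the decomposition $a=r\ell$ can be taken continuously in $a$ and that the central part $\ell$ depends continuously on $a$, so that continuity of $\mathrm{scl}$ on the $\ell$-coordinate transfers back to $A$. Once that is in hand, real homogeneity along $\exp(tX)$ follows from rational homogeneity by continuity, completing the proof.
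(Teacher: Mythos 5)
There is a genuine gap: your whole argument hinges on showing that ${\rm scl}$ is bounded on a neighborhood of the identity in $[G,G]$, and neither of your two proposed routes to this actually works, because both misapply results that concern a much more special class of elements. The paper gets local boundedness for free from a result of Bourbaki: there is a compact neighborhood $K$ of $e$ in $[G,G]$ on which ${\rm cl}(k)\le N=\dim\,[\mathfrak g,\mathfrak g]$, hence ${\rm scl}\le{\rm cl}$ is bounded on compact sets. This single input also lets the paper prove the homogeneity ${\rm scl}(\exp(tX))=t\,{\rm scl}(\exp X)$ \emph{directly} for all real $t\ge 0$, by comparing ${\rm cl}(\exp(tX))$ with ${\rm cl}(\exp(\lfloor t\rfloor X))$ (the error term ${\rm cl}(\exp(\{t\}X))$ is uniformly bounded), with no appeal to continuity or to density of the rationals. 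Your plan reverses this order (continuity first, then real homogeneity by density), which is legitimate only if the continuity argument stands on its own --- and it does not.

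Concretely: (i) Proposition \ref{prop: l^1} bounds ${\rm scl}$ of the element $o^{\delta}(EG)(\alpha)\in\pi_1(G)$, i.e.\ of a \emph{central} element of $\tilde G$ arising as an obstruction evaluation; it says nothing about an arbitrary element of a closed abelian subgroup of $[G,G]$. Moreover your claims that an element near $e$ is a single commutator (genus one) and that ``the surface's simplicial volume shrinks'' as the element shrinks are unjustified --- there is no relation between the metric size of a group element and the $\ell_1$-norm of a representing cycle. (ii) The structural reduction mirroring Proposition \ref{prop: s>0} breaks down: that proof uses in an essential way that $z$ is central in the simply connected group $G$ (this is what forces $\ell$ to be central in $L$, makes $L$ centralize $r$, and makes the decomposition $z=r\ell$ unique), whereas a general element of a closed abelian subgroup of $[G,G]$ is not central, so you cannot conclude ${\rm scl}(r)=0$ via Bavard nor that $\ell$ lies in the center of $L$; and Proposition \ref{prop: scl>0} is only a \emph{lower} bound on ${\rm scl}$, so it cannot give the comparability/continuity you invoke. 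Without a substitute for Bourbaki's local bound on ${\rm cl}$, both your continuity at $e$ and (hence) your extension of homogeneity from rational to real $t$ remain unproved; the remaining steps you outline (sub-additivity on commuting elements spreading continuity at $e$ over a closed abelian subgroup, and the rational scaling ${\rm scl}(v^q)=q\,{\rm scl}(v)$) are fine and coincide with the paper's final step.
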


\begin{proof}
There is a compact neighborhood $K$ of $e\in [G,G]$ such that for all $k\in K$,
	${\rm cl}(k)\le N$, where $N$ is the dimension of the Lie algebra of $[G,G]$
	(Bourbaki, \cite[page 398, 10]{Bou}). Therefore,
	{\rm cl} is bounded on compact subsets of $[G,G]$. 
	Assume that $x\in [G,G]<G$ lies on a one-parameter subgroup $\exp(X)=x$. Consider the  fractional and integer parts $1<t=\{t\}+\lfloor t\rfloor$. As $\lim_{t\to\infty}\lfloor t\rfloor/t=1$, and as
		\[
			\left|\frac{{\rm cl}(\exp(\lfloor t\rfloor X))}{\lfloor t\rfloor}-\frac{{\rm cl}(\exp(tX))}{t}\right|\leq
			2\cdot\frac{{\rm cl}(\exp(\{t\}X))}{t},
		\]
	we deduce that 
	\[
		\lim_{t\to\infty}\frac{{\rm cl}(\exp(tX))}{t}=\lim_{n\to\infty}\frac{{\rm cl}(x^n)}{n}={\rm scl}(x).
	\]
Hence for all $\exp(X)\in [G,G]$, $t\geq 0$, we have $${\rm scl}(\exp(tX))=t\cdot{\rm scl}(\exp(X))\,.$$
As 
${\rm scl}\le {\rm cl}$, ${\rm scl}$ is also bounded on compact sets. This implies 
	that  ${\rm scl}$ is continuous at the identity.  Sub-additivity on commuting elements and continuity at the identity imply continuity on any closed abelian subgroup of $[G,G]$.
\end{proof}	

\begin{lemma}\label{lemma: epsilon} Let $G$ be a simply connected Lie group. Let $V\cong\mathbb R^r$ be a closed
	subgroup of $[G,G]$ and let $A\cong\mathbb Z^r$ be discrete in $V$ and central in $G$.
	Assume ${\rm scl}(a)>0$ for all $0\neq a\in A$. Then there is a constant $\varepsilon>0$ such that 
	for all $a\in A$, ${\rm scl}(a)\geq \varepsilon|a|$. 
\end{lemma}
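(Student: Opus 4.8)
The plan is to exploit the homogeneity property ${\rm scl}(\exp(tX)) = t\cdot{\rm scl}(\exp(X))$ just established, together with the lower bound $s>0$ from Proposition~\ref{prop: s>0}, to manufacture a uniform linear lower bound on $A$. First I would observe that since $V\cong\mathbb R^r$ is a closed one-parameter-subgroup-filled abelian subgroup of $[G,G]$ lying in the center of $G$, every $a\in A$ can be written as $a=\exp(X_a)$ for a unique $X_a$ in the Lie algebra of $V$, and ${\rm scl}(a) = \|X_a\|_V\cdot{\rm scl}(\exp(u_a))$ where $u_a = X_a/\|X_a\|_V$ is the corresponding unit vector (for some fixed Euclidean norm $\|\cdot\|_V$ on the Lie algebra of $V$); note also that $|a|$, the word length of $a$ in $A\cong\mathbb Z^r$, is comparable to $\|X_a\|_V$ up to multiplicative constants, because $A$ is a lattice in $V$. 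So it suffices to produce a uniform positive lower bound on the set $\{{\rm scl}(\exp(u)) : u \text{ a unit vector with } \exp(u) \in \mathbb R_{>0}\cdot X_a \text{ for some } 0\neq a\in A\}$, or more simply, on ${\rm scl}(\exp(u))$ over the unit sphere, in the directions hit by $A$.

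The key step is a compactness-plus-contradiction argument. Suppose no such $\varepsilon>0$ exists. Then there is a sequence $0\neq a_n\in A$ with ${\rm scl}(a_n)/|a_n|\to 0$, equivalently (after passing to the unit vectors $u_n = X_{a_n}/\|X_{a_n}\|_V$) with ${\rm scl}(\exp(u_n))\to 0$. Since the unit sphere in the Lie algebra of $V$ is compact, pass to a subsequence with $u_n\to u_\infty$, a unit vector. By continuity of ${\rm scl}$ on the closed abelian subgroup $V$ (Proposition just before Lemma~\ref{lemma: epsilon}), ${\rm scl}(\exp(u_\infty)) = \lim_n {\rm scl}(\exp(u_n)) = 0$. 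Now scale back up: for the element $\exp(t u_\infty)$ with $t$ small, homogeneity gives ${\rm scl}(\exp(t u_\infty)) = 0$ as well, but this alone does not contradict anything since $\exp(t u_\infty)$ need not lie in $A$. Instead the point is to use the homogeneity in the \emph{other} direction on the $a_n$: we have ${\rm scl}(a_n) = \|X_{a_n}\|_V\cdot{\rm scl}(\exp(u_n))$, and we want to show this cannot go to $0$ relative to $\|X_{a_n}\|_V$, i.e.\ that ${\rm scl}(\exp(u_n))$ is bounded below. So the real claim is: $\inf\{{\rm scl}(\exp(u)) : u \text{ a unit vector in the Lie algebra of } V,\ {\rm scl}(\exp(u)) \neq 0\} > 0$. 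But ${\rm scl}(\exp(u)) \neq 0$ for \emph{every} unit $u$ that is a positive multiple of some $X_a$, because by hypothesis ${\rm scl}(a)>0$ and homogeneity forces ${\rm scl}(\exp(u))>0$; and by Proposition~\ref{prop: s>0}, whenever ${\rm scl}(w)>0$ for a central element $w\in[G,G]$ we actually have ${\rm scl}(w)\geq s$. Thus for every unit vector $u$ in a direction of $A$, either ${\rm scl}(\exp(u)) = 0$ (impossible, as it would force ${\rm scl}(a)=0$) or ${\rm scl}(\exp(u))\geq s$. Hence ${\rm scl}(\exp(u_n))\geq s$ for all $n$, and therefore ${\rm scl}(a_n)\geq s\,\|X_{a_n}\|_V \geq \varepsilon|a_n|$ for a suitable $\varepsilon>0$ coming from the lattice comparison between $\|\cdot\|_V$ and word length.

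Assembling: set $\varepsilon = s\cdot c$ where $c>0$ is a constant with $\|X_a\|_V\geq c|a|$ for all $a\in A$ (such $c$ exists since $A$ is a lattice in $V\cong\mathbb R^r$ and both $\|\cdot\|_V$-distance and word-length are proper, mutually bi-Lipschitz metrics on $A$). Then for every $0\neq a\in A$,
\[
{\rm scl}(a) = \|X_a\|_V\cdot{\rm scl}\bigl(\exp(X_a/\|X_a\|_V)\bigr) \geq \|X_a\|_V\cdot s \geq s\,c\,|a| = \varepsilon|a|,
\]
using Proposition~\ref{prop: s>0} to get the middle inequality once we know ${\rm scl}\bigl(\exp(X_a/\|X_a\|_V)\bigr)>0$, which follows from ${\rm scl}(a)>0$ and homogeneity. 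The main obstacle I anticipate is purely bookkeeping: making sure that the element $\exp(X_a/\|X_a\|_V)$ is genuinely central in $G$ and lies in $[G,G]$ so that Proposition~\ref{prop: s>0} applies to it — this is fine because $V$ is central in $G$ and $V\subset[G,G]$, so the whole line $\mathbb R\cdot X_a$ exponentiates into $V$ and hence into the center of $G$ inside $[G,G]$. One should also double-check that ${\rm scl}$ here is computed in $G$ (not in a subgroup), which is consistent with the convention fixed after the definition of ${\rm scl}$ and with the hypothesis "${\rm scl}(a)>0$ for all $0\neq a\in A$" being a statement about ${\rm scl}_G$.
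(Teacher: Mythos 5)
Your argument has a genuine gap at its crucial step: you apply Proposition~\ref{prop: s>0} to the elements $\exp(u_a)=\exp(X_a/\|X_a\|_V)$, but that proposition requires the element to be \emph{central in $G$}, and nothing in the hypotheses makes these elements central. The lemma only assumes that the lattice $A$ is central; your justification ``this is fine because $V$ is central in $G$'' silently upgrades that to centrality of all of $V$, which does not follow. Centrality of $\exp(X)$ does not propagate along the one-parameter group: in $G=\widetilde{{\rm SL}_2(\mathbb R)}$ take $V$ to be the (closed, non-central) lift of ${\rm SO}(2)\cong\mathbb R$ inside $[G,G]=G$ and $A$ the center of $G$, which is a lattice in $V$; all hypotheses of the lemma hold, yet the unit-sphere elements $\exp(u)$ are non-central, so Proposition~\ref{prop: s>0} simply does not apply to them. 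This is not a corner case one could assume away: in the paper's own application (the proof of Theorem~\ref{theo: main}), $V$ is the Euclidean factor $E$ of the universal cover of a maximal compact subgroup of $[G,G]$, which is generally not central, while only $A=\pi_1(G)\cap[\tilde G,\tilde G]$ is. Note also that applying Proposition~\ref{prop: s>0} to $a$ itself (which \emph{is} central) only yields ${\rm scl}(a)\ge s$, i.e.\ ${\rm scl}(\exp(u_a))\ge s/\|X_a\|_V$, which degenerates as $|a|\to\infty$ and gives no linear bound; so the uniform bound on the sphere is exactly the point that remains unproved in your write-up.

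The paper closes this gap differently, and in fact along the route you considered and discarded in your middle paragraph. It suffices, by continuity of ${\rm scl}$ on $V$, compactness of the unit sphere $S\subset V$, and homogeneity, to show ${\rm scl}$ vanishes nowhere on $S$. If ${\rm scl}(z)=0$ for some $z\in S$, cocompactness of $A$ in $V$ gives $a_n\in A\setminus\{0\}$ and $t_n>0$ with $|a_n-t_nz|<1/n$; sub-additivity on commuting elements together with ${\rm scl}(t_nz)=t_n\,{\rm scl}(z)=0$ gives ${\rm scl}(a_n)={\rm scl}(a_n-t_nz)$, which tends to $0$ by continuity of ${\rm scl}$ at the identity, while Proposition~\ref{prop: s>0} applied to the genuinely central elements $a_n$ forces ${\rm scl}(a_n)\ge s$ --- a contradiction. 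If you want to keep your structure, you must replace the invalid application of Proposition~\ref{prop: s>0} to $\exp(u_a)$ by an argument of this type, which only ever invokes it on elements of $A$.
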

	
\begin{proof} As ${\rm scl}$ is continuous on $V$ and linear along one-parameter semigroups,
	it is enough to show that ${\rm scl}$ vanishes nowhere on the unit sphere $S$ of $V$.
	Assume there is $z\in S$ with ${\rm scl}(z)=0$. As $A$ is cocompact in $V$,
	for each $n\in\mathbb N$, there is $a_n\in A\setminus\{0\}$ and $t_n>0$ such that
	\[
		|a_n-t_nz|<1/n.
	\]
	Hence
	\[
		0<{\rm scl}(a_n)={\rm scl}(a_n-t_nz)\to 0,
	\]
	contradicting Proposition \ref{prop: s>0}.
\end{proof}	
		 
\section{Proof of the main theorem}
We use the notation introduced at the beginning of Section 3.

\begin{lemma}\label{lemma: Hahn-Banach} Let $G$ be a connected Lie group. Let $\tilde{G}$ be its universal cover. Assume that $A=\pi_1(G)\cap[\tilde{G},\tilde{G}]\cong \mathbb Z^r$ is discrete in a closed subgroup $V\cong\mathbb R^r$ of $[\tilde{G},\tilde{G}]$. If ${\rm scl}(a)>0$ for all $a\in A\setminus\{0\}$, then $o^{\delta}(EG)$ is bounded.	
\end{lemma}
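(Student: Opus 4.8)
The plan is to produce a bounded Borel (hence bounded) cocycle representing $o^{\delta}(EG)\in H^2(BG^{\delta},\pi_1(G))$, or more precisely its component in $H^2(BG^{\delta},A)$, by exploiting the linearity of $\mathrm{scl}$ on $V\cong\mathbb R^r$ together with Lemma \ref{lemma: epsilon}. First I would reduce to the relevant coefficient group: since $\pi_1(G)$ is finitely generated abelian and $o^{\delta}_{\mathbb R}(EG)$ lies in $H^2(BG^{\delta},\pi_1([G,G])\otimes\mathbb R)$, it suffices (after splitting off torsion, which contributes only bounded classes automatically, and the free part of $\pi_1(G)$ not hitting $[\tilde G,\tilde G]$, which maps to zero) to bound the part valued in $A\cong\mathbb Z^r\subset V\cong\mathbb R^r$. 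Here I would use Proposition \ref{prop: bounded} to know that boundedness of $o^\delta(EG)$ is equivalent to boundedness of all classes in the image in degree two, and work with the universal obstruction class directly.

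Next, the heart of the argument. Recall Formula (\ref{equa: formula}): for $\alpha\in H_2(BG^{\delta},\mathbb Z)$, evaluating $o^{\delta}(EG)$ on $\alpha$ gives an element of $A$ whose commutator length in $\tilde G$ is at most $g(\alpha)$, the minimal genus, and whose scl is at most $\|\alpha_{\mathbb R}\|_1/4$ by Proposition \ref{prop: l^1}. By Lemma \ref{lemma: epsilon} there is $\varepsilon>0$ with $\mathrm{scl}(a)\ge\varepsilon|a|$ for all $a\in A$. Combining these, $\varepsilon\,|o^{\delta}(EG)(\alpha)|\le \mathrm{scl}(o^{\delta}(EG)(\alpha))\le \|\alpha_{\mathbb R}\|_1/4$, so
\[
|o^{\delta}(EG)(\alpha)|\le \frac{1}{4\varepsilon}\,\|\alpha_{\mathbb R}\|_1
\]
for every integral homology class $\alpha$ in degree two. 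This says exactly that the homomorphism $H_2(BG^{\delta},\mathbb Z)\to A\subset\mathbb R^r$ given by $o^\delta(EG)$ is bounded with respect to the $\ell_1$-seminorm on homology and the Euclidean norm on $\mathbb R^r$; equivalently, each coordinate functional $\varphi\circ o^\delta(EG)\in H^2(BG^{\delta},\mathbb R)$ (for $\varphi$ in a finite spanning set of $(\mathbb R^r)^*$) has finite Gromov seminorm, i.e. is a bounded class. Then I would invoke the standard duality between the $\ell_1$-seminorm on $H_n$ and the sup-seminorm on $H^n$ (as used implicitly throughout via Gromov, and already cited in the excerpt) to conclude that $o^\delta(EG)$, having bounded pairing against all of $H_2$, is represented by a bounded cocycle; then Proposition \ref{prop: bounded} upgrades this to boundedness of all degree-two classes and hence of $o^\delta(EG)$ itself.

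The step I expect to be the main obstacle is the passage from "the obstruction class pairs boundedly with every integral homology class" to "the obstruction class is represented by a cocycle with bounded image." For real coefficients this is the content of the duality $\|x\|_\infty=\sup\{\langle x,\alpha\rangle : \|\alpha\|_1\le 1\}$ for $x\in H^2(BG^\delta,\mathbb R)$, which holds because $H^2_b(G^\delta,\mathbb R)\to H^2(G^\delta,\mathbb R)$ has image the bounded classes and its kernel is detected by $\ell_1$-homology; but one must be a little careful that the hypothesis as obtained only controls pairing with \emph{integral} classes $\alpha$, so I would either note that integral classes are $\|\cdot\|_1$-dense up to scaling in the relevant sense, or, cleaner, push everything through $\mathbb R$ coefficients from the start: bound $\varphi_*o^\delta(EG)\in H^2(BG^\delta,\mathbb R)$ for each coordinate $\varphi$, using that a real class with $|x(\alpha)|\le C\|\alpha_{\mathbb R}\|_1$ on integral $\alpha$ is bounded (this is \cite[Lemma 29]{CMPS} applied to $G^\delta$, already quoted in the excerpt), and finally use the fact recalled after Theorem \ref{theo: main} that boundedness of the real classes is equivalent to boundedness of the integral class $o^\delta(EG)$. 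The remaining bookkeeping — splitting $\pi_1(G)$ into torsion, a free part inside $[\tilde G,\tilde G]$ (namely $A$), and a complementary free part mapping trivially to $[\tilde G,\tilde G]$ — is routine and handled by the structure hypothesis that $A$ is precisely $\pi_1(G)\cap[\tilde G,\tilde G]$ sitting as a lattice in $V\cong\mathbb R^r$.
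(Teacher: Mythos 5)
Your proposal is correct and follows essentially the same route as the paper: the same sandwich $\varepsilon\,|o^{\delta}(EG)(\alpha)|\le \mathrm{scl}(o^{\delta}(EG)(\alpha))\le \|\alpha_{\mathbb R}\|_1/4$ obtained from Lemma \ref{lemma: epsilon} and Proposition \ref{prop: l^1}, followed by converting this pairing bound into a bounded representing cocycle. The paper performs that last step explicitly — defining the $V$-valued linear functional on $Z_2(BG^{\delta},\mathbb R)$ from its values on integral cycles, bounding it on rational cycles and extending by density and Hahn--Banach — which is exactly the content of the ``standard duality'' you invoke (your citation of Lemma 29 of \cite{CMPS} is not the right reference for that particular step, but the statement you need is precisely what this Hahn--Banach argument establishes).
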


\begin{proof} Let us denote $X=BG^{\delta}$. Let $C_2(X,\mathbb R)$ be the
space of singular 2-chains, endowed with the $\ell_1$-norm.
The free $\mathbb Z$-module $Z_2(X,\mathbb Z)$ of integral cycles contains a basis of the $\mathbb R$-vector space 
$Z_2(X,\mathbb R)$ of cycles. Hence there is a unique
$\mathbb R$-linear map $c:Z_2(X,\mathbb R)\to V$ such that for all 
	$x\in Z_2(X,\mathbb Z)$,
	$c(x)=o^{\delta}(EG)([x])$. Let $z\in Z_2(X,\mathbb Q)$, $z=\sum_ir_i\sigma_i$, 
	$r_i\in\mathbb Q$, $\sigma_i$ a singular $2$-simplex of $X$, 
	$|z|_1=\sum_i|r_i|$. 
	Let $m\in\mathbb N$, such $mr_i\in\mathbb Z$ for all $i$. Then, with $\varepsilon>0$ as 
	in Lemma \ref{lemma: epsilon},

	\[
	|c(z)|=\frac{1}{m}|c(mz)|=\frac{1}{m}|o^{\delta}(EG)([mz])|\leq\frac{{\rm scl}(o^{\delta}(EG)([mz]))}{m\varepsilon}
	\]
	and because of Proposition \ref{prop: l^1},
	\[
	\frac{{\rm scl}(o^{\delta}(EG)([mz]))}{m\varepsilon}	
	\leq
	\frac{\|[mz]_{\mathbb R}\|_1}{4m\varepsilon}\leq\frac{|z|_1}{4\varepsilon}.   	
	\]
	As $Z_2(X,\mathbb Q)\subset Z_2(X,\mathbb R)$ is dense, the norm of the linear map
	$c$ is bounded by $(4\varepsilon)^{-1}$. Hahn-Banach's theorem provides a linear extension
	$\hat{c}$
	of $c$ to all of $C_2(X,\mathbb R)$ with the same bound on the norm. Hence 
	\[
	[\hat{c}]=o^{\delta}_{\mathbb R}(EG)\in H^2(X,A\otimes\mathbb 
	R)\subset H^2(X,\pi_1(G)\otimes\mathbb R) 
	\] 
	is bounded. Hence $o^{\delta}(EG)\in H^2(X,\pi_1(G))$ is bounded as well.
\end{proof}

\noindent
We are ready for the proof of the main theorem.
\begin{proof}[Proof of Theorem \ref{theo: main}]
\smallskip
(\ref{cond: bounded}) implies (\ref{cond: simply connected}):

\noindent
Assume that $\pi_1([R,R])\ne 0$. Then according to Goldman \cite{Gol},
and as we have seen at the beginning of Section 3, there exists a map $f:\Sigma_g\to BR^\delta$
for some closed oriented
surface of genus $g\ge 1$ such that $(f^*o^\delta(ER))([\Sigma_g])\in \pi_1([R,R])$ is not zero.
Because $\pi_1([R,R])$ is torsion-free, we conclude that the image
$o^\delta_{\mathbb R}(ER)\in H^2(BR^\delta,\pi_1(R)\otimes\mathbb R)$ also satisfies 
$(f^*o^\delta_{\mathbb R}(ER))([\Sigma_g])\ne 0$.
It follows that $o^\delta_{\mathbb R}(ER)$ cannot be bounded, because the bounded
cohomology of the discrete amenable group $R^\delta$ vanishes (Johnson, \cite{John}).
Denote by $\iota: \pi_1(R)\otimes \mathbb R\to \pi_1(G)\otimes \mathbb R$ the
natural isometric embedding and let $j: R\to G$ be the inclusion. 
Because
$$0\neq\iota_*(o^\delta_\mathbb R(BR))=j^*(o^\delta_\mathbb R(BG))$$
we see that $o^\delta_\mathbb R(EG)$ is not bounded.
It follows that $o^\delta(EG)$ cannot be bounded either.
\smallskip

(\ref{cond: simply connected}) implies (\ref{cond: undistorted}):

\noindent
Assume that $z\in\pi_1(G)\subset\tilde{G}$ is distorted in $\tilde{G}$ and
hence of infinite order. Then there is 
an $n>0$ such that 
$z^n\in \tilde{R}$, otherwise the projection of $z$ in $\tilde{G}/\tilde{R}$ would be
a central distorted element and this is impossible \cite[Lemma 6.3]{CPS}.
The distorted element  $z^n\in\tilde{R}\cap \pi_1(G)=\pi_1(R)$
must belong to $[\tilde{R},\tilde{R}]$ because otherwise its projection to 
\[
	\tilde{R}/[\tilde{R},\tilde{R}]\subset\tilde{G}/[\tilde{R},\tilde{R}]
\]
would span a central distorted line. But a line in $\tilde{R}/[\tilde{R},\tilde{R}]\cong\mathbb R^r$ which is central in 
$\tilde{G}/[\tilde{R},\tilde{R}]$ is a direct factor because $\tilde{G}/\tilde{R}$
is semisimple. Hence it cannot be distorted. We conclude that 
\[z^n\in [\tilde{R},\tilde{R}]\cap\pi_1(G)=\pi_1([R,R]),
\]
contradicting (\ref{cond: simply connected}).

\smallskip

(\ref{cond: undistorted}) implies (\ref{cond: stable}):

\noindent
Let
$\tilde{G}=\tilde{R}L$ be a Levi decomposition and let 
$z=r\ell\in\pi_1(G)\cap [\tilde{G},\tilde{G}]$ be of infinite order,
$r\in{\tilde R}$ and $\ell\in L$.
Note that $L=[L,L]$ and ${\rm scl}(z)\ge {\rm scl}(\ell)={\rm scl}_L(\ell)$.
If $\ell$ has infinite order, Corollary \ref{ss-case} implies that
${\rm scl}_L(\ell)>0$ and we conclude that ${\rm scl}(z)>0$. If $\ell$ has
finite order, there is an $n>0$ such that $z^n=(r\ell)^n=r^n$,
and $r^n\in \tilde{R}\cap\pi_1(G)$ actually lies in $[\tilde{R},\tilde{R}]$, as we have
seen in the course of the proof of \ref{prop: s>0}. 
Proposition 19 of \cite{CMPS} implies that $z$ is distorted
in $\tilde{R}$, hence also in $\tilde{G}$.

\smallskip
(\ref{cond: stable}) implies (\ref{cond: bounded}):
\smallskip

\noindent
Let $K$ be a maximal compact subgroup of $[G,G]$. The universal cover of $K$ is a closed subgroup $E\times S$ of $[\tilde{G},\tilde{G}]$ where $E\cong\mathbb R^r$ and $S$ is  compact semisimple.
Let $p:E\times S\to S$ be the projection onto the second factor.
The projection of $\pi_1(G)\cap[\tilde{G},\tilde{G}]=\pi_1([G,G])\subset E\times S$
in $S$ is central hence finite. Thanks to Proposition \ref{prop: finite cover}, we may replace $G$ by a finite homomorphic cover and hence 
assume that $\pi_1(G)\cap[\tilde{G},\tilde{G}]\subset E$. Applying Lemma \ref{lemma: Hahn-Banach} and Proposition \ref{prop: bounded} implies boundedness in degree $2$.
Boundedness in higher degree then follows from \cite[Lemma 51, Theorem 54]{CMPS}.

\end{proof}

\begin{bibdiv}
\begin{biblist}
	
	\bib{BG}{article}{
	   author={Barge, Jean},
	   author={Ghys, {\'E}tienne},
	   title={Surfaces et cohomologie born\'ee},
	   language={French},
	   journal={Invent. Math.},
	   volume={92},
	   date={1988},
	   number={3},
	   pages={509--526},
	}

	\bib{Bav}{article}{
	   author={Bavard, Christophe},
	   title={Longueur stable des commutateurs},
	   language={French},
	   journal={Enseign. Math. (2)},
	   volume={37},
	   date={1991},
	   number={1-2},
	   pages={109--150},
	}

\bib{Borel}{article}{
 author={Borel, Armand},
	   title={Class functions, conjugacy classes and commutators in
	   semisimple Lie groups},
	   journal={Australian Math. Soc Lecture Series},
	   volume={9},
	   date={1997},
	   pages={1--19},
     publisher={Cambridge University Press}
     }
	
	\bib{Bou}{book}{
	   author={Bourbaki, Nicolas},
	   title={Lie groups and Lie algebras. Chapters 1--3},
	   series={Elements of Mathematics (Berlin)},
	   note={Translated from the French;
	   Reprint of the 1989 English translation},
	   publisher={Springer-Verlag},
	   place={Berlin},
	   date={1998},
	   pages={xviii+450},
	}

\bib{Bucher}{article}{
author={Bucher-Karlsson, Michelle},
title={Finiteness properties of characteristic classes of flat bundles},
journal={Enseign. Math. },
volume={53},
date={2007},
number={2},
pages={33-66}
}

\bib{Cal}{book}{
   author={Calegari, Danny},
   title={scl},
   series={MSJ Memoirs},
   volume={20},
   publisher={Mathematical Society of Japan},
   place={Tokyo},
   date={2009},
   pages={xii+209},
}

\bib{CMPS}{article}{
		   author={Chatterji, Indira},
		   author={Mislin, Guido},
		   author={Pittet, Christophe},
		   author={Saloff-Coste, Laurent},
		   title={A geometric criterion for the boundedness of characteristic classes},
		   journal={Math. Ann.},
		   volume={351},
		   date={2011},
		   number={3},
		   pages={541--569},
		}	
		\bib{CPS}{article}{
		   author={Chatterji, Indira},%
		   author={Pittet, Christophe}
		   author={Saloff-Coste, Laurent},
		   title={Connected Lie groups and property RD},
		   journal={Duke Math. J.},
		   volume={137},
		   date={2007},
		   number={3},
		   pages={511--536},
		}
		
	\bib{Dup}{article}{
		   author={Dupont, Johan L.},
		   title={Bounds for characteristic numbers of flat bundles},
		   conference={
		      title={Algebraic topology, Aarhus 1978 (Proc. Sympos., Univ. Aarhus,
		      Aarhus, 1978)},
		   },
		   book={
		      series={Lecture Notes in Math.},
		      volume={763},
		      publisher={Springer},
		      place={Berlin},
		   },
		   date={1979},
		   pages={109--119},
		}	

	\bib{Gol}{article}{
	   author={Goldman, William M.},
	   title={Flat bundles with solvable holonomy. II. Obstruction theory},
	   journal={Proc. Amer. Math. Soc.},
	   volume={83},
	   date={1981},
	   number={1},
	   pages={175--178},
	}
	
	\bib{GroVol}{article}{
	   author={Gromov, Michael},
	   title={Volume and bounded cohomology},
	   journal={Inst. Hautes \'Etudes Sci. Publ. Math.},
	   number={56},
	   date={1982},
	   pages={5--99 (1983)},
	}
	
	\bib{Hopf}{article}{
	   author={Hopf, Heinz},
	   title={Fundamentalgruppe und zweite Bettische Gruppe},
	   language={German},
	   journal={Comment. Math. Helv.},
	   volume={14},
	   date={1942},
	   pages={257--309},
	}

	\bib{John}{book}{
	   author={Johnson, Barry Edward},
	   title={Cohomology in Banach algebras},
	   note={Memoirs of the American Mathematical Society, No. 127},
	   publisher={American Mathematical Society},
	   place={Providence, R.I.},
	   date={1972},
	   pages={iii+96},
	}

	\bib{Mil}{article}{
	  author={Milnor, John},
	   title={On the existence of a connection with curvature zero},
	   journal={Comment. Math. Helv.},
	   volume={32},
	   date={1958},
	   pages={215--223},
	}
	
	\bib{MilDis}{article}{
	   author={Milnor, John},
	   title={On the homology of Lie groups made discrete},
	   journal={Comment. Math. Helv.},
	   volume={58},
	   date={1983},
	   number={1},
	   pages={72--85},
	}
	\bib{Thom}{article}{
	   author={Thom, Ren{\'e}},
	   title={Sur un probl\`eme de Steenrod},
	   language={French},
	   journal={C. R. Acad. Sci. Paris},
	   volume={236},
	   date={1953},
	   pages={1128--1130},
	}

\end{biblist}
\end{bibdiv}
\end{document}